\documentclass[12pt]{article}

\usepackage[a4paper,margin=1in]{geometry}
\usepackage{amsmath,amssymb,amsthm,mathtools}
\usepackage{booktabs,array,multirow}
\usepackage{tikz}
\usetikzlibrary{arrows.meta,positioning,calc,shapes.geometric,fit,backgrounds,shadows}
\usepackage{caption}
\usepackage{subcaption}
\usepackage{enumitem}
\usepackage{xcolor,cite,float}
\usepackage{hyperref}
\hypersetup{colorlinks=true,linkcolor=blue,citecolor=blue,urlcolor=blue}
\allowdisplaybreaks
\renewenvironment{proof}{{\bf \noindent Proof.}}{\qed}
\everymath{\displaystyle}

\newtheorem{theorem}{Theorem}[section]
\newtheorem{lemma}[theorem]{Lemma}
\newtheorem{proposition}[theorem]{Proposition}
\newtheorem{corollary}[theorem]{Corollary}

\newtheorem{problem}[theorem]{ Problem}
\theoremstyle{definition}

\newtheorem{example}[theorem]{Example}
\theoremstyle{remark}

\title{On the vertex connectivity of weakly zero-divisor graph of commutative rings}
 \author{Mohd Shariq$^{a}$  and Jitender Kumar$^{b}$  \\
	 \small $^{a,b}$Department of Mathematics, Birla Institute of Technology and Science Pilani, Pilani-333031, India \\
	 $^{a}$\texttt{shariqamu90@gmail.com},  $^{b}$\texttt{jitenderarora09@gmail.com}\\
 }

\date{}
\begin{document}
\maketitle

 \begin{abstract}
      The weakly zero-divisor graph $W\Gamma(R)$ of a commutative ring $R$ is the simple undirected graph whose vertices are nonzero zero-divisors of $R$, and two distinct vertices $x$, $y$ are adjacent if and only if there $w\in {\rm ann}(x)$ and $ z\in {\rm ann}(y)$ such that $wz =0$. In this paper, we obtain the vertex connectivity of $W\Gamma(R)$, where  $R$ is an Artinian ring or reduced ring. Indeed, for these rings, we prove that the vertex connectivity of $W\Gamma(R)$ is equal to its minimum degree. This paper also characterizes all the vertices of minimum degree of $W\Gamma(R)$.
      
    \end{abstract}
\noindent\textbf{AMS 2020 Mathematics Subject Classification.} Primary 05C69; Secondary 05C31, 13M99, 05C25.

\medskip
\noindent\textbf{Keywords.} weakly zero-divisor graph; cut sets; vertex connectivity; Artinian ring; reduced ring.  
 \section{Introduction}
 Several graphs associated with algebraic structures have been extensively investigated in the literature. An overview of the various graph constructions arising from commutative rings can be found in the book \cite{Anderson2021Graphs}. In the setting of commutative rings, the classical zero-divisor graph translates the algebraic equation $xy = 0$ into a graph-theoretic adjacency relation. The zero-divisor graph originated in Beck's \cite{Beck1988} study of coloring problems for commutative rings, in which the vertices are all the elements of the ring. The definition was later modified by Anderson and Livingston \cite{AndersonLivingston1999}, yielding the now-standard zero-divisor graph whose vertices are precisely the nonzero zero-divisors of the ring. Since then, several graphs have been associated with commutative rings. These graphs help us to understand the relationship between the algebraic properties of a ring $R$ and the graph-theoretical properties of its associated graph.  
 
 For $x\in R$, the annihilator of $x$ is defined by ${\rm ann}(x)=\{r\in R\mid rx=0\}$.  Nikmehr et al.  \cite{nikmehr2021weakly} introduced the  \emph{weakly zero-divisor graph} $W\Gamma(R)$ of the ring $R$ is the simple undirected graph whose vertex set is the set of all nonzero zero-divisors of the ring $R$, and two vertices $x, y$ are adjacent if and only if there exist $w\in {\rm ann}(x)$ and $z \in {\rm ann}(y)$ such that $wz =0$. One can observe that the zero-divisor graph is a spanning subgraph of the weakly zero-divisor graph. The graph $W\Gamma(R)$ has been studied from several viewpoints.
Nikmehr et al.  \cite{nikmehr2021weakly} explored the relationship between zero-divisor graphs and weakly zero-divisor graphs. Moreover, they examined several basic properties, including completeness, girth, clique number, and vertex chromatic number of $W\Gamma(R)$.
Rehman et al. \cite{ur2024planarity} classified all the commutative rings $R$ for which the weakly zero-divisor graph is a star graph, a unicyclic graph, a tree and a split graph. Furthermore, they identified all the rings $R$ for which $W\Gamma(R)$ is planar, toroidal, bitoroidal and of crosscap at most two, respectively. Visweswaran \cite{visweswaran2025some} studied the complement of the weakly zero-divisor graph of a reduced ring. Moreover, they investigated its connectedness, girth and number of all connected components.
Kumar et al. \cite{shariq2023laplacianspectrum} investigated the Laplacian eigenvalues of the weakly zero-divisor graph of the ring $\mathbb{Z}_n$. Additionally, they showed that the weakly zero-divisor graph of the ring $\mathbb{Z}_n$ is Laplacian integral for arbitrary $n$.
Nadeem et al. \cite{rehman2024randic} investigated the Randic spectrum of the weakly zero-divisor graph of the ring $\mathbb{Z}_n$.  Other aspects of weakly zero-divisor graphs have also been studied in the literature, see \cite{mozumder2025exploring, rehman2025signless,shariq2025sombor,shylla2025laplacian} and the references therein.
 
The Notion of vertex connectivity of the zero-divisor graph has been investigated by various researchers. Extell et al. \cite{axtell2011cut} examine the cut vertices of the zero divisor graph of the finite commutative rings. Moreover, they provide a partial classification of the rings in which they appear.
  Redmond \cite{redmond2012cut} continues to examine cut vertices in the zero-divisor graphs of commutative rings with unity. Furthermore, they investigated the degree one vertices of zero-divisor graphs. B. {Cot{\'e}  et al. \cite{cote2011cut} examine the minimal cut sets in zero-divisor graphs of finite non-local commutative rings with identity. Reza Akhtar et al. \cite{akhtar2016connectivity} studied the vertex-connectivity and edge-connectivity of the zero-divisor graph associated with a finite commutative ring $R$. They showed that the edge-connectivity of the zero-divisor graph equals its minimum degree, and that the vertex-connectivity also equals the minimum degree when $R$ is non-local. Sriparna et al. \cite{chattopadhyay2024vertex} determine the vertex connectivity of the zero-divisor graph, where $R$ is a local principal ideal ring or $R$ is a finite direct product of local principal ideal rings. Moreover, they characterize the vertices of minimum degree and the minimum cut-sets of the zero-divisor graph of the ring $R$. It is natural to investigate the vertex connectivity of the supergraphs of the zero-divisor graphs.
  In this manuscript, we aim to study the vertex connectivity of the weakly zero-divisor graph of the commutative rings.

In Section 3, we study the weakly zero-divisor graph $W\Gamma(R)$, where $R$ is a reduced ring having  finite number of minimal prime ideals. For the reduced ring $R$, the set of zero-divisors is nothing but the union of all the minimal prime ideals of $R$. These minimal prime ideals play a fundamental role in obtaining the vertex connectivity of $W\Gamma(R)$. The distinct sets of vertices that contain the elements of one prime ideal but not the other minimal prime ideals are useful in determining the cut sets, the minimum cut set and the vertices of the minimum degree.
 In Section 4, we use the decomposition of Artinian ring $R$ into fields and non-reduced Artinian local rings to study the structure of $W\Gamma(R)$. Consequently, we construct vertex sets that determine the corresponding cut sets, and among these sets, the sets of maximum cardinality help us to determine vertex connectivity and the vertices of minimum degree. 


 \section{Preliminaries}
 Let $\Gamma$ be a simple graph with vertex set $V(\Gamma)$ and edge set $E(\Gamma)$. Two adjacent vertices $x$, $y$ in the graph $\Gamma$ are denoted by $x\sim y$. A path  in a graph  $\Gamma$ is a sequence of vertices $x_1,\ldots,x_{k+1}$ such that $x_ix_{i+1}\in E(\Gamma)$ for all $i\in\{1,\ldots,k\}$. A graph $\Gamma$ is \emph{connected} if there is a path between
any two distinct vertices of $\Gamma$.
 For an arbitrary connected graph $\Gamma$, a subset 
$A \subseteq V(\Gamma)$ is called a \emph{cut-set} if there exist distinct 
vertices $x,y \in V(\Gamma)\setminus A$ such that every path from 
$x$ to $y$ in $\Gamma$ contains at least one vertex of $A$, and no 
proper subset of $A$ possesses same  property. A cut-set $A$ is said to be \emph{minimum} if $|A| \leq |B|$ for every cut-set $B$ of $\Gamma$. Moreover, if $V(\Gamma)$ can be expressed as a union of two non-empty disjoint subsets $A$ and $B$ such that no edge of $\Gamma$ from a vertex of $A$ to a vertex of $B$, then $(A, B)$ is called a separation of $\Gamma$. Note that the graph $\Gamma$ is disconnected if and only if it admits a separation of $\Gamma$. The \emph{neighbourhood} ${N(x)}$ of a vertex $x\in V(\Gamma)$  is the set of vertices adjacent to the vertex $x$ and the \emph{degree} of a vertex $x \in V(\Gamma)$, denoted by $\deg(x)$, is 
defined as the cardinality of its neighbourhood. The\emph{ minimum degree} of $\Gamma$ is defined by
$\delta(\Gamma)=\text{min}\{\deg(v)\mid v\in V(\Gamma)\}$.
For a graph $\Gamma$, the \emph{vertex-connectivity} $\kappa(\Gamma)$ is the cardinality of a minimum cut set of the graph $\Gamma$.

The following well-known inequality relates the vertex connectivity, edge connectivity, and minimum degree of a graph $\Gamma$.
\begin{proposition}[{\cite[Theorem 4.1.9]{west2001introduction}}]
\label{minimumdegree+vertex}
For any graph $\Gamma$, we have
$$
\kappa(\Gamma)\leq \lambda(\Gamma)\leq \delta(\Gamma).
$$
\end{proposition}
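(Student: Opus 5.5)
We prove the two inequalities separately, working with a connected graph $\Gamma$ on at least two vertices; for a disconnected graph all quantities are $0$ by convention, and for a complete graph $K_n$ we take $\kappa=\lambda=\delta=n-1$. Here $\lambda(\Gamma)$ is the minimum number of edges whose removal disconnects $\Gamma$.

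\emph{The bound $\lambda(\Gamma)\le\delta(\Gamma)$.} Choose a vertex $v$ with $\deg(v)=\delta(\Gamma)$. Deleting the $\delta(\Gamma)$ edges incident to $v$ isolates $v$. So these edges form a disconnecting edge set, and $\lambda(\Gamma)\le\delta(\Gamma)$.

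\emph{The bound $\kappa(\Gamma)\le\lambda(\Gamma)$.} This is the main step. Let $F=[S,\overline{S}]$ be a minimum edge cut, so $|F|=\lambda(\Gamma)$. By minimality, removing $F$ leaves exactly the two sides $S$ and $\overline{S}$, and every edge of $F$ joins them. We split into two cases.

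\textbf{Case 1: some $x\in S$ and $y\in\overline{S}$ are nonadjacent.} For each edge of $F$, select its endpoint different from $x$. If the edge is incident to $x$, this endpoint lies in $\overline{S}$ and is not $y$, since $x\not\sim y$. Otherwise the selected endpoint is its end in $S$. Let $T$ be the set of selected vertices, so $|T|\le|F|$, and $x,y\notin T$. Every $x$–$y$ path must use some edge of $F$. Consider the first such edge on the path. If it leaves $x$ directly, its other end lies in $T$. Otherwise its $S$-endpoint is an internal vertex of the path and lies in $T$. Hence $T$ separates $x$ from $y$. Some subset of $T$ is then a cut-set in the paper's minimal sense, giving $\kappa(\Gamma)\le|T|\le\lambda(\Gamma)$.

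\textbf{Case 2: every vertex of $S$ is adjacent to every vertex of $\overline{S}$.} Then $\lambda(\Gamma)=|F|=|S|\,|\overline{S}|$. If $|S|=a$ and $|\overline{S}|=n-a$, then $a(n-a)\ge n-1$ whenever $1\le a\le n-1$. Any graph that is not complete has a separating set of at most $n-2$ vertices: deleting everything except two nonadjacent vertices works. A complete graph has $\kappa=n-1$. In either situation $\kappa(\Gamma)\le n-1\le\lambda(\Gamma)$.

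The only delicate point is Case 1. There we must check that the vertices chosen in $T$ avoid both $x$ and $y$, and that $T$ genuinely meets every $x$–$y$ path. The rest of the argument follows directly from the definitions.
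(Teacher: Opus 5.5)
Your proof is correct and is essentially the standard argument from the source the paper cites (West, Theorem 4.1.9); the paper itself states this proposition without proof. Isolating a minimum-degree vertex gives $\lambda\le\delta$, and for $\kappa\le\lambda$ you take a minimum edge cut $[S,\overline{S}]$: if the sides are fully joined you use $|S|\,|\overline{S}|\ge n-1$, and otherwise you pick one endpoint per cut edge avoiding a nonadjacent pair $x,y$, exactly as West does. One small slip is that for a disconnected graph only $\kappa$ and $\lambda$ are $0$ while $\delta$ need not be, but the inequality still holds trivially there.
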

Throughout this paper, we assume that all rings are commutative with identity. An element $x \in R$ is called a \emph{zero-divisor} if there exists a nonzero element $y \in R$ such that $xy=0$, and an element $u\in R$ is called a \emph{unit} if there exists
$v\in R$ such that
$uv=1$. An element $x \in R$ is said to be \emph{nilpotent} if $x^m=0$ for some positive integer $m$. If $x$ is nilpotent, then the least positive integer $k$ for which $x^k=0$ is called the 
\emph{index of nilpotence} of $x$. Every nilpotent element of $R$ is a zero-divisor. For each $r \in R$, the set
${\mathrm{ann}}(r)=\{x \in R \mid rx=0\}
$
forms an ideal of $R$, called the annihilator of $r$.
We denote the set of all zero-divisors of $R$ by $Z(R)$, the set of all units of $R$ by $U(R)$, the set of all prime ideals of a ring $R$ by $\mathrm{Spec}(R)$, the set of all minimal prime ideals of a ring $R$ by $\mathrm{Min}(R)$ and the set of all nilpotent elements of $R$ by $\mathrm{Nil}(R)$. For any subset $S \subseteq R$, we define $S^* = S \setminus \{0\}$. A ring $R$ is called \emph{local} if it has a unique maximal ideal. The ring $R$ is said to be \emph{reduced} if it contains no nonzero nilpotent elements. For more notation on graphs and rings, refer to \cite{atiyah2018introduction, west2001introduction}. Now we recall the following classical structure theorem for Artinian rings.
\begin{theorem} [{\cite[Theorem 3.1.4]{bini2002finite}}]
  An Artinian ring $R$ can be written uniquely (up to isomorphism) as 
$R \cong R_1 \times R_2 \times \cdots \times R_n$,
where each $R_i$ is an Artinian local ring.
\end{theorem}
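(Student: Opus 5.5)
The classical structure theorem is cited from Bini--Flamini, so it is enough to sketch the standard argument.

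\emph{Existence.} An Artinian ring has only finitely many maximal ideals $\mathfrak m_1,\dots,\mathfrak m_n$. To see this, suppose there were infinitely many. Then the descending chain $\mathfrak m_1\supseteq \mathfrak m_1\mathfrak m_2\supseteq \cdots$ stabilizes, say $\mathfrak m_1\cdots\mathfrak m_k\subseteq \mathfrak m_{k+1}$. Primality of $\mathfrak m_{k+1}$ then forces $\mathfrak m_i\subseteq\mathfrak m_{k+1}$ for some $i\le k$, so $\mathfrak m_i=\mathfrak m_{k+1}$ by maximality, a contradiction. Every prime ideal of an Artinian ring is maximal, so the Jacobson radical $J=\bigcap_i\mathfrak m_i$ equals $\mathrm{Nil}(R)$. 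Next, $J$ is nilpotent. The chain $J\supseteq J^2\supseteq\cdots$ stabilizes at some $I=J^k$ with $I^2=I$. If $I\neq 0$, choose an ideal $K$ minimal among those with $KI\neq 0$. The usual Nakayama argument shows $K=(x)$ is principal with $x=xy$ for some $y\in I$. Since $y$ is nilpotent, $x=xy^m=0$, a contradiction. Hence $\prod_i \mathfrak m_i^{k}\subseteq J^k=0$. The ideals $\mathfrak m_i^k$ are pairwise comaximal, because their radicals $\mathfrak m_i$ are distinct maximal ideals. The Chinese Remainder Theorem therefore gives
$R\cong \prod_{i=1}^n R/\mathfrak m_i^k$.
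Each factor $R/\mathfrak m_i^k$ is Artinian, and it is local with unique maximal ideal $\mathfrak m_i/\mathfrak m_i^k$.

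\emph{Uniqueness.} Suppose $R\cong\prod_{j=1}^m S_j$ with each $S_j$ local. The decomposition corresponds to a complete set of orthogonal idempotents $e_j$ with $e_jR\cong S_j$. A local ring has no idempotents other than $0$ and $1$. Consequently the $e_j$ are exactly the primitive idempotents of $R$, and these are intrinsic to $R$. Hence $m=n$, and the factors agree up to permutation and isomorphism. Equivalently, the maximal ideals of $\prod_j S_j$ are the ideals $S_1\times\cdots\times\mathfrak n_j\times\cdots\times S_m$, one for each $j$, and the localizations $R_{\mathfrak m_j}\cong S_j$ recover the factors.

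The main obstacle is the nilpotence of the Jacobson radical, which is the Nakayama-type minimality argument; the remaining steps are routine.
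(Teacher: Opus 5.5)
Your proposal is correct. The paper gives no proof of this statement; it simply cites Bini--Flamini. Your existence argument is the standard textbook one: finitely many maximal ideals, $J=\mathrm{Nil}(R)$ nilpotent by the minimal-ideal argument, then the Chinese Remainder Theorem applied to the pairwise comaximal ideals $\mathfrak m_i^k$. Your uniqueness argument via primitive idempotents is also valid, and it differs from the Atiyah--Macdonald route through isolated primary components of $(0)$: it uses only that local rings have no nontrivial idempotents, so it applies to any finite product of local rings, Artinian or not.
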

 The next result records the completeness of the weakly zero-divisor graph $W\Gamma(R)$ of an Artinian ring $R$.
 \begin{theorem}[{\cite[Theorem 2.6]{nikmehr2021weakly}}]\label{nikmehr2021weaklycmpt}
Let $R$ be an Artinian ring. Then $W\Gamma(R)$ is a complete graph if and only if one of the following statements holds:
\begin{enumerate}[label=\textup{(\roman*)}]
    \item $R \cong \mathbb{Z}_2 \times \cdots \times \mathbb{Z}_2$.
    \item $R \cong R_1 \times \cdots \times R_m$, where $R_i$ is a non-reduced Artinian local ring for every $1 \leq i \leq m$.
\end{enumerate}
\end{theorem}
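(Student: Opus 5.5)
The strategy is to prove both directions by analysing when two nonzero zero-divisors $x,y$ fail to be adjacent in $W\Gamma(R)$. Write $R\cong R_1\times\cdots\times R_n$ with each $R_i$ Artinian local with maximal ideal $\mathfrak m_i$, using the structure theorem above.

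For a non-reduced Artinian local ring, $\mathfrak m_i$ is nilpotent and nonzero. Hence there is a nonzero $s_i\in\mathfrak m_i^{k-1}$, where $k$ is the nilpotency index of $\mathfrak m_i$. This $s_i$ satisfies $s_i\mathfrak m_i=0$, so $s_i$ lies in ${\rm ann}(t)$ for every $t\in\mathfrak m_i$, and $s_i^2=0$ because $k\ge 2$.

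\emph{Sufficiency.} In case (i), every element of $\mathbb{Z}_2^n$ is idempotent. If $x,y$ are nonzero zero-divisors, then $1-x\in{\rm ann}(x)$ and $1-y\in{\rm ann}(y)$. Choose coordinates $i,j$ where $x_i=1$ and $y_j=1$, and an index $l$ where $x_l=0$ (one exists since $x$ is a zero-divisor). Take $w=e_l\in{\rm ann}(x)$. If $y_l=1$, then $z=1-y$ annihilates $y$ and $wz=e_l(1-y)=0$. If $y_l=0$, take $z=e_m$ for some $m\neq l$ with $y_m=0$, which gives $wz=0$. If no such $m$ exists, then $y=1-e_l$; in that case set $z=e_l$ and $w=e_{l'}$ for another zero coordinate $l'$ of $x$, or, if $x=1-e_l$ as well, then $x=y$ and there is nothing to prove. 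A simpler uniform choice is $w=1-x$ and $z=x(1-y)$, and I would verify that some such pair always works.

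In case (ii), the argument is quicker. Let $s=(s_1,\dots,s_m)$. Since $x$ is a zero-divisor, at least one coordinate of $x$ lies in $\mathfrak m_i$; in fact, I would show directly that $x$ has an annihilator $w=s_ie_i\neq 0$ for any $i$ with $x_i\in\mathfrak m_i$. Similarly $y$ has an annihilator $z=s_je_j$. Then $wz$ is either $0$ (when $i\neq j$) or $s_i^2e_i=0$. So every two vertices are adjacent.

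\emph{Necessity.} Suppose that $R$ has a field factor $R_1=F$ and that $R$ is not $\mathbb{Z}_2^n$. Then either some $R_j$ is non-reduced, or some field factor has at least three elements. I would exhibit two non-adjacent vertices. The natural choices are $x=e_1=(1,0,\dots,0)$ and $y=(u,1,\dots,1)$ or $(u,0,\dots)$ with $u\ne 0,1$. One has ${\rm ann}(e_1)=0\times R_2\times\cdots$ and ${\rm ann}(1-e_1)=F\times 0$. So $x=e_1$ and $y'=(u,1,\dots,1)-e$-type elements whose annihilator is contained in $F\times0$ would work. Concretely: if $|F|\ge3$, take $x=(1,0,\ldots,0)$ and $y=(u,0,\dots,0)$ with $u\neq 0,1$; these have the same annihilator $0\times R'$, which squares to something nonzero. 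That alone is not enough, since we need all products $wz\neq 0$, and $w=(0,1,0,\dots)$, $z=(0,0,1,\dots)$ give $0$ when $n\ge3$. So instead I would take $x=(1,1,\dots,1,0)$-type elements with annihilator $0\times\cdots\times F$, and $y=(1,\dots,1,u')$ where $u'$ is a nonzero zero-divisor in a non-reduced factor, or use two distinct nonzero scalar multiples when the field has $\ge3$ elements. The point is that the annihilators of both vertices are contained in a single field coordinate $F$, and $F$ has no nonzero products equal to $0$.

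The main obstacle is necessity: choosing witnesses so that both annihilators are confined to one field factor while the vertices remain distinct. The condition that $R$ is not $\mathbb{Z}_2^n$ and not all-non-reduced is exactly what supplies that second vertex. Either the field has at least three elements, giving $(1,\dots,1,0)$ and $(1,\dots,1,0)$ with a different unit in one coordinate, or a non-reduced or larger factor elsewhere supplies a different unit in another coordinate.
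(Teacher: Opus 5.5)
The paper quotes this result from Nikmehr et al.\ without proof, so there is no in-paper argument to compare with, and your proposal has to stand on its own. Your sufficiency direction is fine. In case (ii) the elements $s_ie_i$ do what you claim. In case (i) your case split on $e_l$, $1-y$, $e_m$ is complete; your ``uniform'' pair $w=1-x$, $z=x(1-y)$ also works after possibly swapping $x$ and $y$, since $x(1-y)=0=y(1-x)$ forces $x=y$. You are right to require $w,z\neq 0$, as in the original definition; otherwise every pair would be adjacent.

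The gap is in necessity. You have the right idea: find two distinct vertices whose annihilators both equal $F_ie_i$ for a single field factor $F_i$. But you never fix a construction, and the candidates you write down fail.
\begin{itemize}
\item Take $x=(1,\dots,1,0)$ with the field in slot $n$, and let $y$ carry a nonzero zero-divisor $u'$ of a non-reduced factor $R_j$ in its $j$-th slot. Then ${\rm ann}(y)$ contains $se_j$ with $0\neq s\in{\rm ann}_{R_j}(u')$. Since $e_n\in{\rm ann}(x)$ and $e_n\cdot se_j=0$, you get $x\sim y$. The $j$-th slot must instead carry a \emph{different unit}, e.g.\ $1+m$ with $0\neq m\in\mathfrak m_j$.
\item A field with at least three elements does not help if the zero sits in that field and every other factor is $\mathbb Z_2$. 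In $\mathbb Z_3\times\mathbb Z_2$ the only element $(0,u)$ with $u$ a unit is $(0,1)$. The non-adjacent pair is $(1,0),(2,0)$, with the zero in the $\mathbb Z_2$ slot.
\end{itemize}

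To close the argument, fix a field factor $F_i$ and let $H_i=\{x : x_i=0,\ x_j\in U(R_j)\text{ for all } j\neq i\}$; these are the sets $H_i$ the paper uses in its Artinian section.
\begin{itemize}
\item Every $x\in H_i$ has ${\rm ann}(x)=F_ie_i$, and nonzero elements of $F_ie_i$ have nonzero products. So $H_i$ is an independent set, with $|H_i|=\prod_{j\neq i}|U(R_j)|$.
\item An Artinian local ring has exactly one unit only when it is $\mathbb Z_2$: if $\mathfrak m_j\neq 0$, then $1+\mathfrak m_j\subseteq U(R_j)$ gives at least two units.
\item Hence, if some factor other than $F_i$ is not $\mathbb Z_2$, then $|H_i|\geq 2$ and $W\Gamma(R)$ is not complete.
\item Otherwise $R\cong F_i\times\mathbb Z_2^{\,n-1}$ with $|F_i|\geq 3$. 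For $n\geq 2$, the set $H_l$ for a $\mathbb Z_2$-coordinate $l$ has $|F_i|-1\geq 2$ elements.
\item The remaining case $n=1$, i.e.\ $R$ a field other than $\mathbb Z_2$, cannot be handled this way. There $W\Gamma(R)$ has no vertices, so the ``only if'' direction needs the implicit hypothesis that $R$ is not a field.
\end{itemize}
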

\section{Reduced rings} 
For an ideal $I$ of a ring $R$, define $V(I)=\{\mathfrak{p}\in Spec(R)| I\subseteq \mathfrak{p}\}$. The following lemma is useful in the sequel.
\begin{lemma}[{\cite[Theorem 2.6]{visweswaran2025some}}]\label{edgejiolemmaREDUCE}
    Let $R$ be a reduced ring with $|\mathrm{Min}(R)|=n\geq 2.$  Then $x\sim y$ in $W\Gamma(R)$ if and only if either $V(Rx)\cap \mathrm{Min}(R)\neq V(Ry)\cap \mathrm{Min}(R)$ or   $ |V(Rx)\cap \mathrm{Min}(R)|\geq 2$.
\end{lemma}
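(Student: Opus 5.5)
The plan is to translate adjacency in $W\Gamma(R)$ into a purely combinatorial condition on subsets of $\mathrm{Min}(R)=\{\mathfrak p_1,\dots,\mathfrak p_n\}$. I read the definition of adjacency with $w,z$ required to be \emph{nonzero}, since otherwise $w=0$ makes every pair adjacent.

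\emph{Step 1: annihilators and prime supports.} For $x\in R$ put $S(x)=\{i : x\in\mathfrak p_i\}$, so that $V(Rx)\cap\mathrm{Min}(R)=\{\mathfrak p_i : i\in S(x)\}$. Since $R$ is reduced, $\bigcap_i\mathfrak p_i=\mathrm{Nil}(R)=0$ and $Z(R)=\bigcup_i\mathfrak p_i$. Hence for a nonzero zero-divisor $x$ we have $\emptyset\neq S(x)\neq\{1,\dots,n\}$. Moreover $wx=0$ holds iff $wx\in\mathfrak p_i$ for all $i$, iff $w\in\mathfrak p_i$ for every $i\notin S(x)$. Thus
\[
{\rm ann}(x)=\bigcap_{i\notin S(x)}\mathfrak p_i ,
\]
and for nonzero $w,z$ we have $wz=0$ iff $S(w)\cup S(z)=\{1,\dots,n\}$. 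So $x\sim y$ iff there are nonzero $w,z$ with $S(w)\supseteq S(x)^c$, $S(z)\supseteq S(y)^c$ and $S(w)\cup S(z)=\{1,\dots,n\}$.

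\emph{Step 2: realizability.} I will show that every proper subset $T\subsetneq\{1,\dots,n\}$ equals $S(w)$ for some nonzero $w$. The minimal primes are pairwise incomparable, so for $j\notin T$ we have $\prod_{i\in T}\mathfrak p_i\not\subseteq\mathfrak p_j$; for $T=\emptyset$ take $w=1$. Prime avoidance then gives $w\in\bigcap_{i\in T}\mathfrak p_i$ with $w\notin\bigcup_{j\notin T}\mathfrak p_j$. This $w$ is nonzero because $T^c\neq\emptyset$ and $0$ lies in every prime. Consequently, $x\sim y$ iff there exist \emph{proper} subsets $T\supseteq S(x)^c$ and $U\supseteq S(y)^c$ with $T\cup U=\{1,\dots,n\}$. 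I expect this step to be the main technical point: one must make sure the chosen $w,z$ are nonzero and that the supports can be prescribed exactly.

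\emph{Step 3: the combinatorics.} Suppose first that $S(x)\neq S(y)$; by symmetry assume $S(x)\not\subseteq S(y)$. Take $U=S(y)^c$ and $T=S(x)^c\cup S(y)$. Then $U$ is proper because $S(y)\neq\emptyset$, $T$ is proper because some $i\in S(x)\setminus S(y)$ is missing from it, and $T\cup U$ is everything. So $x\sim y$.

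Now suppose $S(x)=S(y)=S$. The sets $T$ and $U$ must together cover $S$, while each must omit some element, necessarily from $S$.
\begin{itemize}
\item If $|S|\geq2$, split $S=S_1\sqcup S_2$ into nonempty parts and take $T=S^c\cup S_1$, $U=S^c\cup S_2$. Then $x\sim y$.
\item If $|S|=1$, one of $T,U$ must contain the unique element of $S$ and is then the full set, which is not allowed. So $x\not\sim y$.
\end{itemize}
Combining the cases gives exactly the stated criterion: $x\sim y$ iff $V(Rx)\cap\mathrm{Min}(R)\neq V(Ry)\cap\mathrm{Min}(R)$ or $|V(Rx)\cap\mathrm{Min}(R)|\geq2$.
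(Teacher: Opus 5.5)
Your proof is correct and complete. The paper gives no proof of this lemma; it simply imports it from Visweswaran. Your argument is therefore a self-contained replacement rather than a variant of anything in the text.

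It has two ingredients. The first is the formula $\mathrm{ann}(x)=\bigcap_{i\notin S(x)}\mathfrak p_i$, together with the fact that $wz=0$ iff $S(w)\cup S(z)=\{1,\dots,n\}$. The second is exact realizability of every proper support by prime avoidance, using that $\prod_{i\in T}\mathfrak p_i\not\subseteq\mathfrak p_j$ for $j\notin T$ by minimality. Together they turn adjacency into a pure covering condition on subsets, after which both directions are a short check.

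The paper, by contrast, re-derives by hand only the special case $|S(x)|=1$: that $\mathrm{ann}(x)=\bigcap_{j\neq i}\mathfrak p_j$ for $x\in\gamma_i$, and that two elements of the same $\gamma_i$ are non-adjacent. It does this repeatedly, in the completeness lemma for reduced rings, in Theorem~\ref{cutsetinreduced} and in Lemma~\ref{degrrinreduce}. Your support-set formulation gives all of these at once and would shorten those arguments.

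You were also right to require $w,z$ to be nonzero. The definition as printed in the paper omits this, and under the literal reading every pair of vertices is adjacent (take $w=0$), so the ``only if'' direction of the lemma would fail. The paper's own computations tacitly use the nonzero version as well, for instance when concluding $x'\notin\mathfrak p_i$ from $x'\in\bigcap_{j\neq i}\mathfrak p_j$.
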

For each $1\leq i\leq n$, define
$\gamma_i:=\mathfrak{p_i}\setminus\bigcup_{j=1,j\neq i}\mathfrak{p_j}$. Let $\alpha_{\max}:=\max\{|\gamma_i|:1\leq i\leq n\}$ and
$S:=\{j: |\gamma_j|=\alpha_{max}\}$. 

\begin{lemma}\label{vertesetcondition}
Let $R$ be a reduced ring with $ |\mathrm{Min}(R)|=n\geq 2$. Then $|V(Rx)\cap  \mathrm{Min}(R)|=1$ if and only if $x\in \gamma_i$. 
\end{lemma}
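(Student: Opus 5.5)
The plan is to unwind the definitions, using that $\mathrm{Min}(R)=\{\mathfrak{p}_1,\ldots,\mathfrak{p}_n\}$ and that $Rx\subseteq\mathfrak{p}$ if and only if $x\in\mathfrak{p}$. Hence
\[
V(Rx)\cap\mathrm{Min}(R)=\{\mathfrak{p}_j : x\in\mathfrak{p}_j\}.
\]
The condition $|V(Rx)\cap\mathrm{Min}(R)|=1$ therefore says that $x$ lies in exactly one minimal prime. This is literally the statement that $x\in\mathfrak{p}_i\setminus\bigcup_{j\neq i}\mathfrak{p}_j=\gamma_i$ for some (necessarily unique) $i$.

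For the forward direction, suppose $|V(Rx)\cap\mathrm{Min}(R)|=1$, say $V(Rx)\cap\mathrm{Min}(R)=\{\mathfrak{p}_i\}$. Then $x\in\mathfrak{p}_i$, and $x\notin\mathfrak{p}_j$ for every $j\neq i$, so $x\in\gamma_i$.

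For the converse, suppose $x\in\gamma_i$. Then $x\in\mathfrak{p}_i$, so $\mathfrak{p}_i\in V(Rx)$. Moreover $x\notin\mathfrak{p}_j$ for $j\ne i$, so no other minimal prime contains $Rx$. Hence the intersection is exactly $\{\mathfrak{p}_i\}$ and has cardinality one.

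There is no real obstacle. The only points to state carefully are two. First, the statement should be read as ``$x\in\gamma_i$ for some $i$.'' Second, the set of minimal primes is finite and indexed as $\mathfrak{p}_1,\ldots,\mathfrak{p}_n$, so the union defining $\gamma_i$ ranges over all other minimal primes. One may also remark that, since $R$ is reduced, $Z(R)=\bigcup_i\mathfrak{p}_i$, so every such $x$ is indeed a vertex; this is not needed for the equivalence itself.
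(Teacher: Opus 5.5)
Your proof is correct and follows essentially the same route as the paper. Both arguments use $Rx\subseteq\mathfrak{p}\iff x\in\mathfrak{p}$ to identify $V(Rx)\cap\mathrm{Min}(R)$ with the set of minimal primes containing $x$, and then read off both directions directly from the definition of $\gamma_i$. Your explicit reading of the statement as ``$x\in\gamma_i$ for some $i$'' matches the paper's intent.
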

\begin{proof} Suppose $|V(Rx)\cap  \mathrm{Min}(R)|=1$. Without loss of generality, assume that $V(Rx)\cap  \mathrm{Min}(R)=\{\mathfrak{p_i}\}$. Then $Rx\subseteq\mathfrak{p_i}$ and so $x\in \mathfrak{p_i}$. To end the proof, we show that $x\not\in \mathfrak{p_j}$ for all $j\in\{1,2,\dots,n\}\setminus\{i\}$. Suppose $x\in\mathfrak{p_j}$ for some  $j\in\{1,2,\dots,n\}\setminus\{i\}$. Then $Rx\subseteq\mathfrak{p_j}$ and so $\mathfrak{p_j}\in V(Rx)\cap  \mathrm{Min}(R)$ for some  $j\in\{1,2,\dots,n\}\setminus\{i\}$. It follows that $|V(Rx)\cap  \mathrm{Min}(R)|\geq 2$, a contradiction. Thus,  $x\in\gamma_i$. Let  $x\in \gamma_i$. Then $x\in \mathfrak{p_i}$ for some $i\in\{1,2,\ldots,n\}$ but $x\not\in\mathfrak{p_j}$ for all $j$, where $j\in\{1,2,\ldots,n \}\setminus\{i\}$. Clearly, $Rx\subseteq \mathfrak{p_i}$ but  $Rx\not\subseteq \mathfrak{p_j}$ for all $j$. Therefore,  $V(Rx)\cap \mathrm{Min}(R)=\{\mathfrak{p_i}\}$. This completes our proof.
\end{proof}

\begin{lemma}
    Let $R$ be reduced ring with $|\mathrm{Min}(R)|=n\geq2$. Then  $W\Gamma(R)$ is a complete graph if and only if $\alpha_{max}=1$. 
\end{lemma}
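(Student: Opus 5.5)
The plan is to reduce completeness of $W\Gamma(R)$ to a statement about the sets $\gamma_i$, using Lemma~\ref{edgejiolemmaREDUCE} and Lemma~\ref{vertesetcondition}. The key claim is this: two distinct vertices $x,y$ of $W\Gamma(R)$ are non-adjacent if and only if $x,y\in\gamma_i$ for the same index $i$. Granting the claim, $W\Gamma(R)$ is complete exactly when every $\gamma_i$ has at most one element. Once we also show that each $\gamma_i$ is nonempty, this is equivalent to $\alpha_{\max}=1$.

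\textbf{Step 1 (basic facts).} First record the standard facts for a reduced ring with finitely many minimal primes $\mathfrak{p}_1,\dots,\mathfrak{p}_n$. We have $Z(R)=\bigcup_{i}\mathfrak{p}_i$, so every nonzero zero-divisor lies in some minimal prime. Hence $|V(Rx)\cap\mathrm{Min}(R)|\geq 1$ for every vertex $x$. Moreover, each element of $\gamma_i$ is nonzero, since $0$ lies in every $\mathfrak{p}_j$ and $n\geq 2$. Each element of $\gamma_i$ is also a zero-divisor, since it lies in $\mathfrak{p}_i$. So $\gamma_i\subseteq V(W\Gamma(R))$.

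\textbf{Step 2 (nonemptiness of $\gamma_i$).} Distinct minimal primes are incomparable, so $\mathfrak{p}_i\not\subseteq\mathfrak{p}_j$ for $j\neq i$. By prime avoidance, $\mathfrak{p}_i\not\subseteq\bigcup_{j\neq i}\mathfrak{p}_j$, hence $\gamma_i\neq\emptyset$ and $\alpha_{\max}\geq 1$. I expect this to be the only point needing an outside tool. Without it, $\alpha_{\max}=0$ could not be ruled out, and the equivalence would be stated as ``all $|\gamma_i|\leq 1$'' rather than ``$\alpha_{\max}=1$''.

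\textbf{Step 3 (the claim).} By Lemma~\ref{edgejiolemmaREDUCE}, distinct vertices $x,y$ are non-adjacent if and only if two conditions hold: $V(Rx)\cap\mathrm{Min}(R)=V(Ry)\cap\mathrm{Min}(R)$, and this common set has at most one element. By Step 1 the common set has exactly one element, say $\{\mathfrak{p}_i\}$. By Lemma~\ref{vertesetcondition}, $x,y\in\gamma_i$. Conversely, if $x,y\in\gamma_i$, then Lemma~\ref{vertesetcondition} (together with its proof, which identifies the singleton as $\{\mathfrak{p}_i\}$) gives $V(Rx)\cap\mathrm{Min}(R)=V(Ry)\cap\mathrm{Min}(R)=\{\mathfrak{p}_i\}$. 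Hence $x\not\sim y$.

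\textbf{Step 4 (conclusion).} If $\alpha_{\max}=1$, then no $\gamma_i$ contains two distinct elements, so by Step 3 every pair of distinct vertices is adjacent and $W\Gamma(R)$ is complete. If $\alpha_{\max}\neq 1$, then by Step 2 we have $\alpha_{\max}\geq 2$. So some $\gamma_i$ contains two distinct vertices, which are non-adjacent by Step 3, and $W\Gamma(R)$ is not complete.
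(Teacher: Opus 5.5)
Your proof is correct and follows essentially the same route as the paper. Both arguments rest on Lemma~\ref{edgejiolemmaREDUCE} and Lemma~\ref{vertesetcondition}. The paper shows non-adjacency inside $\gamma_i$ by a direct annihilator computation ($\mathrm{ann}(x)=\bigcap_{j\neq i}\mathfrak{p}_j$) and handles the converse by case analysis; you package both directions into the single criterion that distinct vertices are non-adjacent exactly when they lie in a common $\gamma_i$. Your Step 2 (prime avoidance giving $\gamma_i\neq\emptyset$) is a genuine addition: the paper's forward direction only rules out $\alpha_{\max}\geq 2$ and tacitly assumes $\alpha_{\max}\geq 1$, and your argument closes that small gap.
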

\begin{proof}
Let $W\Gamma(R)$ be a complete graph. Suppose $\alpha_{max} \geq 2$. Then $|\gamma_i| \geq 2$ for some $i \in S$. Let $x,y \in \gamma_i$. Then $V(Rx) \cap \mathrm{Min}(R)= \{\mathfrak{p_i}\}= V(Ry) \cap \mathrm{Min}(R).$ Moreover, $\operatorname{ann}(x) = \cap_{{j=1,j\neq i}}^{n}\mathfrak{p_j}= \operatorname{ann}(y).$ Let $x' \in \operatorname{ann}(x)$ and $y' \in \operatorname{ann}(y)$. It follows that $x',y' \notin \mathfrak{p_i}$ and so $x'y' \neq 0,$ a contradiction. Conversely, let  $\alpha_{max}=1$. Then $|\gamma_i|=1$ for all $i\in \{1,2,\ldots,n\}$. Let $x,y\in V(W\Gamma(R))$. Then $x\in \mathfrak{p_i}$ and $y\in \mathfrak{p_j}$ for some $i,j\in \{1,2,\ldots,n\}$. It follows that either $x\in \gamma_i$ or  $x\not\in \gamma_i$. First, suppose that  $x\in \gamma_i$. Then $V(Rx)\cap  \mathrm{Min}(R)=\{\mathfrak{p_i}\}$. If $y\in \gamma_j$,  then $V(Ry)\cap  \mathrm{Min}(R)=\{\mathfrak{p_j}\}$. Therefore, by Lemma \ref{edgejiolemmaREDUCE}, we have $x\sim y$. If $y\not\in \gamma_j$, then by Lemma \ref{vertesetcondition}, we have $|V(Ry)\cap  \mathrm{Min}(R)|\geq2$ and so by  Lemma \ref{edgejiolemmaREDUCE},  $x\sim y$. We may now suppose that $x\not\in \gamma_i$. Then $|V(Rx)\cap  \mathrm{Min}(R)|\geq2$. If $y\in \gamma_j$ or  $y\not\in \gamma_j$, then by Lemma \ref{edgejiolemmaREDUCE} and \ref{vertesetcondition}, we have $x\sim y$. Hence,  $W\Gamma(R)$ is a complete graph.
\end{proof}

Suppose $|\mathrm{Min}(R)|=2$ and let $\mathrm{Min}(R)=\{\mathfrak{p_1},\mathfrak{p_2}\}$. As $\mathrm{Nil}(R)=(0)$, by {\cite[Proposition 1.8]{atiyah2018introduction}}, we have $\mathfrak{p_1}\cap \mathfrak{p_2}=0$ and so $Z(R)^*=\mathfrak{p_1}\cup \mathfrak{p_2}$. Therefore,
$W\Gamma(R)$ is a complete bipartite graph with vertex set $V(W\Gamma(R))=V_1\cup V_2$, where $V_1=\mathfrak{p_1}\setminus\{0\}$ and $V_2=\mathfrak{p_2}\setminus\{0\}$. Hence, either $\mathfrak{p_1}\setminus\{0\}$ or $\mathfrak{p_2}\setminus\{0\}$ is a minimum cut-set of $W\Gamma(R)$. Consequently,
$$
\kappa(W\Gamma(R))
=\min\{|\mathfrak{p_1}\setminus\{0\}|,\ |\mathfrak{p_2}\setminus\{0\}|\}
=\delta(W\Gamma(R)).
$$
In the remainder of this section, we assume that $\alpha_{max}\geq 2$ and $|\mathrm{Min}(R)|\geq3$.

 
\begin{theorem}
    Let $R$ be a reduced ring with $ |\mathrm{Min}(R)|=n\geq 3$. Then $\kappa(W\Gamma(R))=\delta(W\Gamma(R))$
\end{theorem}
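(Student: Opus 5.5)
Since Proposition \ref{minimumdegree+vertex} gives $\kappa(W\Gamma(R))\leq\delta(W\Gamma(R))$, it suffices to exhibit the degrees explicitly and show that every cut-set has at least $\delta(W\Gamma(R))$ vertices. First I compute degrees from Lemma \ref{edgejiolemmaREDUCE} together with Lemma \ref{vertesetcondition}. If $|V(Rx)\cap\mathrm{Min}(R)|\geq 2$, i.e.\ $x\notin\bigcup_i\gamma_i$, then $x$ is adjacent to every other vertex, so $\deg(x)=|Z(R)^*|-1$. If $x\in\gamma_i$, then $x$ is adjacent to every vertex $y$ except those with $V(Ry)\cap\mathrm{Min}(R)=\{\mathfrak{p_i}\}$, i.e.\ except $y\in\gamma_i$. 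Since $\gamma_i$ is a clique-free (independent) set, $\deg(x)=|Z(R)^*|-|\gamma_i|$. Hence
$$\delta(W\Gamma(R))=|Z(R)^*|-\alpha_{\max},$$
attained exactly at the vertices of $\gamma_j$ with $j\in S$. The standing assumption $\alpha_{\max}\geq 2$ makes this smaller than $|Z(R)^*|-1$; if $\alpha_{\max}=1$ the graph is complete and there is nothing to prove. Here $Z(R)^*$ may be infinite, and then the statement is read as a cardinal equality; the same argument works.

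Next I show the structure of any separation. Let $A$ be a cut-set, and let $x,y$ lie in different components of $W\Gamma(R)-A$. Every vertex outside $\bigcup_i\gamma_i$ is universal, so no such vertex lies outside $A$. Thus $V\setminus A\subseteq\bigcup_i\gamma_i$. Moreover, vertices in $\gamma_i$ and $\gamma_j$ with $i\neq j$ are adjacent. If the vertices of $V\setminus A$ met two different $\gamma$'s, say $\gamma_i$ and $\gamma_j$, then $W\Gamma(R)-A$ would be connected: any two vertices from different classes are adjacent, and two vertices of the same class $\gamma_i$ have a common neighbour in $\gamma_j$. Hence $V\setminus A\subseteq\gamma_i$ for a single $i$, and therefore
$$|A|\geq|Z(R)^*|-|\gamma_i|\geq|Z(R)^*|-\alpha_{\max}=\delta(W\Gamma(R)).$$
This gives $\kappa\geq\delta$. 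Conversely, $A=Z(R)^*\setminus\gamma_j$ with $j\in S$ is indeed a separating set of the right size, since $\gamma_j$ is independent and has at least two vertices.

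The main obstacle is making the degree computation rigorous. One must check that $\gamma_i$ is nonempty and that the classes really partition the "non-universal" vertices. Nonemptiness of each $\gamma_i$ follows from prime avoidance, since $\mathfrak{p_i}\not\subseteq\bigcup_{j\neq i}\mathfrak{p_j}$ for finitely many incomparable primes. This matters because the connectivity argument uses a common neighbour in another class $\gamma_j$, which needs $n\geq 2$ nonempty classes (here $n\geq3$). One must also handle the infinite case carefully, so that $|Z(R)^*|-\alpha_{\max}$ is interpreted as $|Z(R)^*\setminus\gamma_j|$ throughout.
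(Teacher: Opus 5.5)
Your proof is correct, and it takes a different route from the paper's.

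\textbf{The paper's route.} The paper uses a Whitney-type argument. It fixes an arbitrary $A$ with $|A|<\delta(W\Gamma(R))$. For vertices $x\in\mathfrak{p}_i$ and $y\in\mathfrak{p}_j$ of $W\Gamma(R)-A$, it picks representatives $w\in\gamma_i$ and $z\in\gamma_j$. From $\deg(w),\deg(z)\ge\delta>|A|$ it obtains surviving neighbours $u,v$, using the claimed inclusion $N(w)\subseteq N(x)$. It then joins $x$ to $y$ through $u$ and $v$ by a case analysis on $V(Ru)\cap\mathrm{Min}(R)$ and $V(Rv)\cap\mathrm{Min}(R)$. It never needs the value of $\delta$.

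\textbf{Your route.} You read the whole graph off Lemma~\ref{edgejiolemmaREDUCE}. Vertices with $|V(Rx)\cap\mathrm{Min}(R)|\ge 2$ are universal, each $\gamma_i$ is independent, and distinct classes are completely joined. So $W\Gamma(R)$ is complete multipartite, and any separating set must contain everything outside a single $\gamma_i$. This gives more than the theorem:
\begin{itemize}
\item You get $N(x)=Z(R)^*\setminus\gamma_i$ for $x\in\gamma_i$. Hence $\delta$, the minimum cut sets and the minimum-degree vertices follow at once, which is the content of Theorem~\ref{cutsetinreduced}, Lemma~\ref{degrrinreduce} and the later results.
\item The argument works verbatim for infinite $Z(R)^*$, via $|A|\ge|Z(R)^*\setminus\gamma_i|=\deg(x)\ge\delta$, with no subtraction of cardinals.
\item It avoids the paper's inclusion $N(w)\subseteq N(x)$. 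That inclusion is not literally true when $x$ is universal, since then $x\in N(w)\setminus N(x)$.
\end{itemize}

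\textbf{A correction to your final paragraph.} The prime-avoidance step you flag as the main obstacle is not needed for $\kappa\ge\delta$. When you rule out $V\setminus A$ meeting two classes, the common neighbour is taken from $(V\setminus A)\cap\gamma_j$. That set is nonempty by the very hypothesis you are contradicting, so nonemptiness of the classes in the ring is never used. Once $V\setminus A\subseteq\gamma_i$, this class contains $x$, so $|A|\ge|Z(R)^*\setminus\gamma_i|=\deg(x)\ge\delta$ directly.

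Likewise, the formula $\delta=|Z(R)^*|-\alpha_{\max}$ and the upper bound via $Z(R)^*\setminus\gamma_j$ with $j\in S$ only need $\alpha_{\max}\ge 2$, which is the standing assumption. So prime avoidance is harmless but can be dropped entirely.
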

\begin{proof} Let $ \mathrm{Min}(R)=\{\mathfrak{p_i}| i\in\{1,2,\ldots,n\}\}$. As $\mathrm{Nil}(R)=(0)$, by {\cite[Proposition 1.8]{atiyah2018introduction}}, we have $\cap_{i=1}^{n}\mathfrak{p_i}=0$. Note that $Z(R)=\cup_{i=1}^{n}\mathfrak{p_i}$. By Proposition \ref{minimumdegree+vertex}, it remains to prove that $\kappa(W\Gamma(R)) \geq \delta(W\Gamma(R))$. To this end, let $A \subsetneq  V(W\Gamma(R))$ such that $|A|< \delta(W\Gamma(R))$. We show that the graph  $G = W\Gamma(R) - A$ is connected. 

Let $x,y\in G$. Then we provide at least one path between $x$ and $y$.  Since $V(G)\subseteq Z(R)^*$, it follows that $x\in \mathfrak{p}_i$
and  $y\in\mathfrak{p}_j$ for some $i,j\in \{1,2,\ldots,n\}$.
Consider a vertex  $w\in\gamma_i$. Then by Lemma \ref{vertesetcondition},  we have $V(Rw)\cap \mathrm{Min}(R)=\{\mathfrak{p_i}\}$. Suppose $a\in \gamma_i$. Then $V(Ra)\cap \mathrm{Min}(R)=\{\mathfrak{p_i}\}$. Moreover,  ${\rm ann}(w)=\cap_{j=1,j\neq i}^{n}\mathfrak{p_j}={\rm ann}(a)$.  Let $w'\in {\rm ann}(w)$ and  $a'\in {\rm ann}(a)$. Then we have $w',a'\not\in \mathfrak{p_i}$ and so  $w'a'\neq0$. Thus, $a\not\sim w$. Therefore, $N(w)\subseteq Z(R)^*\setminus\{\gamma_i\}$. It follows that $| Z(R)^*\setminus\{\gamma_i\}|\geq \delta(W\Gamma(R))> |A|$ and so  $G$ must contain a vertex $u\in N(w)$. By Lemma \ref{edgejiolemmaREDUCE}, observe that $N(w)\subseteq N(x)$. Therefore $u\in N(x)$.
Similarly, for $y\in \mathfrak{p_j}$ for some $j\in \{1,2,\ldots,n\}$, we obtain a vertex $z\in \gamma_j$ such that $N(z)\subseteq Z(R)^*\setminus\{\gamma_j\}$. Indeed, $G$ must contains a vertex  $v\in N(y)$. 

Let $u\in \mathfrak{p}_r$ and $v\in  \mathfrak{p}_m$ for some
$ {r,m}\in\{1,\ldots,n\}$. If either  $u\in \mathfrak{p}_s$ for some $s\in \{1,2,\ldots,n\}\setminus\{r\}$ or $v\in \mathfrak{p}_t$ for some $t\in \{1,2,\ldots,n\}\setminus\{m\}$, then either 
$\left|V(Ru)\cap\operatorname{Min}(R)\right|\geq 2$ or $\left|V(Rv)\cap\operatorname{Min}(R)\right|\geq 2$ and so by Lemma \ref{edgejiolemmaREDUCE}, we have $u\sim v$.

Let $u\notin \mathfrak{p}_s$ for all $s\in\{1,\ldots,n\}$
and $v\notin  \mathfrak{p}_t$ for all $t\in\{1,\ldots,n\}$. 
If $r\neq m$, then $V(Ru)\cap\operatorname{Min}(R)\neq V(Rv)\cap\operatorname{Min}(R)$. Therefore, by Lemma \ref{edgejiolemmaREDUCE}, we have $u\sim v$. If $r=m$, then $V(Ru)\cap\operatorname{Min}(R)=V(Rv)\cap\operatorname{Min}(R)$. Since $u\in N(x)$ and $v\in N(y)$, by Lemma \ref{edgejiolemmaREDUCE}, we have $V(Ru)\cap\operatorname{Min}(R)\neq V(Rx)\cap\operatorname{Min}(R)$ or $\left|V(Ru)\cap\operatorname{Min}(R)\right|\geq 2$ and $V(Ry)\cap\operatorname{Min}(R)\neq V(Rv)\cap\operatorname{Min}(R)$ or $\left|V(Rv)\cap\operatorname{Min}(R)\right|\geq 2$.
As $V(Ru)\cap\operatorname{Min}(R)= V(Rv)\cap\operatorname{Min}(R)$, it follows that $V(Ry)\cap\operatorname{Min}(R)\neq V(Ru)\cap\operatorname{Min}(R)$. Therefore, by Lemma \ref{edgejiolemmaREDUCE}, we get $ u\sim y$. If $\left|V(Ru)\cap\operatorname{Min}(R)\right|\geq 2$, then by Lemma \ref{edgejiolemmaREDUCE} , we obtain  $ u\sim y$. Therefore,  $ x\sim u\sim y$ is a path in $G$.
\end{proof}

\begin{theorem}\label{cutsetinreduced} 
Let $R$ be a reduced ring such that $ |\mathrm{Min}(R)|=n\geq 3$. Then, for each  $1\leq i\leq n$, the set $Z(R)^{*}\setminus\{\gamma_i\}$ is a cut set of  $W\Gamma(R)$. 
\end{theorem}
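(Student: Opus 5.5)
We need to show that $A:=Z(R)^{*}\setminus\gamma_i$ is a cut set in the paper's sense. That means removing $A$ disconnects some pair of vertices, and no proper subset of $A$ does so. The graph $W\Gamma(R)-A$ has vertex set exactly $\gamma_i$. We work under the standing assumption $\alpha_{\max}\ge 2$, and we additionally need $|\gamma_i|\ge 2$ for the statement to be meaningful. Indeed, if $|\gamma_i|=1$, the deletion leaves a single vertex, and there is nothing to separate.

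\textbf{Step 1: the graph left after deleting $A$ is edgeless.} For distinct $a,w\in\gamma_i$, Lemma \ref{vertesetcondition} gives $V(Ra)\cap\mathrm{Min}(R)=\{\mathfrak{p_i}\}=V(Rw)\cap\mathrm{Min}(R)$. Both conditions of Lemma \ref{edgejiolemmaREDUCE} then fail, so $a\not\sim w$. Equivalently, as in the proof of the previous theorem, $\mathrm{ann}(a)=\bigcap_{j\neq i}\mathfrak{p_j}=\mathrm{ann}(w)$, and the product of two elements outside the prime $\mathfrak{p_i}$ is nonzero. Hence any two distinct $x,y\in\gamma_i$ have no path between them in $W\Gamma(R)-A$, so $A$ separates them.

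\textbf{Step 2: minimality.} Let $B\subsetneq A$ and pick $v\in A\setminus B$. We claim $W\Gamma(R)-B$ is connected, so that no pair of vertices is separated by $B$. The key point is that $v$ is adjacent to every element of $\gamma_i$. Write $v\in\mathfrak{p_k}$.
\begin{itemize}
\item If $v\notin\bigcup_{j\ne k}\mathfrak{p_j}$, then $v\in\gamma_k$ with $k\neq i$, since $v\notin\gamma_i$. Then $V(Rv)\cap\mathrm{Min}(R)=\{\mathfrak{p_k}\}\neq\{\mathfrak{p_i}\}$, so $v\sim a$ for all $a\in\gamma_i$ by Lemma \ref{edgejiolemmaREDUCE}.
\item Otherwise $|V(Rv)\cap\mathrm{Min}(R)|\ge 2$ by Lemma \ref{vertesetcondition}, and again $v\sim a$ for every vertex $a$.
\end{itemize}
So $\gamma_i\cup\{v\}$ induces a connected star in $W\Gamma(R)-B$.

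It remains to attach every other surviving vertex $u\in A\setminus B$ to this component. The same dichotomy applies to $u$: either $u\in\gamma_k$ with $k\neq i$, or $u$ lies in at least two minimal primes. In both cases Lemma \ref{edgejiolemmaREDUCE} gives $u\sim a$ for any $a\in\gamma_i$. Since $\gamma_i\neq\emptyset$ is untouched by $B$, every remaining vertex is adjacent to some vertex of $\gamma_i$. Together with the previous paragraph, this shows $W\Gamma(R)-B$ is connected.

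\textbf{Main obstacle.} The delicate point is the precise notion of cut set: one must check that no proper subset separates \emph{any} pair, not just the chosen pair $x,y$. The observation above handles this uniformly, because every vertex outside $\gamma_i$ is adjacent to all of $\gamma_i$. A secondary subtlety is the nondegeneracy requirement $|\gamma_i|\ge 2$, which we assume. Note that $\gamma_i\neq\emptyset$ holds automatically for minimal primes by prime avoidance, since $\mathfrak{p_i}\not\subseteq\bigcup_{j\neq i}\mathfrak{p_j}$.
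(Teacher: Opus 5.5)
Your proposal is correct and follows essentially the paper's proof. Both arguments rest on two facts from Lemmas \ref{edgejiolemmaREDUCE} and \ref{vertesetcondition}: $\gamma_i$ is independent, and every vertex outside $\gamma_i$ is adjacent to all of $\gamma_i$. Your direct proof that $W\Gamma(R)-B$ is connected is a cleaner version of the paper's contradiction argument, which shows that a separating proper subset would have to contain every common neighbour of the separated pair.

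Your caveat $|\gamma_i|\ge 2$ is a genuinely missing hypothesis in the statement. For example, for $R=\mathbb{Z}_2\times\mathbb{Z}_2\times\mathbb{Z}_3$ one has $\alpha_{\max}=2$ but $\gamma_3=\{(1,1,0)\}$, so the claim fails for $i=3$.
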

\begin{proof} Let $ \mathrm{Min}(R)=\{\mathfrak{p_i}| i\in\{1,2,\ldots,n\}\}$. As $\mathrm{Nil}(R)=(0)$, by {\cite[Proposition 1.8]{atiyah2018introduction}}, we have $\cap_{i=1}^{n}\mathfrak{p_i}=0$. Note that $Z(R)=\cup_{i=1}^{n}\mathfrak{p_i}$. Now, let $\Gamma_i$ be the subgraph induced by the set $\gamma_i$. Then, we claim that $\Gamma_i$ 
    is the null graph. Let $x,y\in \gamma_i$. Clearly, $|V(Rx)\cap \mathrm{Min}(R)|=1=|V(Ry)\cap  \mathrm{Min}(R)|$.
     Then by Lemma \ref{vertesetcondition}, we obtain $V(Rx)\cap \mathrm{Min}(R)=\{\mathfrak{p_i}\}$. Observe that ${\rm ann}(x)={\rm ann}(y)=\cap_{j=1,j\neq i}^{n}\mathfrak{p_j}$. Let $x'\in {\rm ann}(x)$ and  $y'\in {\rm ann}(y)$. Then we have $x',y'\not\in \mathfrak{p_i}$ and so  $x'y'\neq0$. Thus, $\Gamma_i$ is a null graph. Now, define a set $T=\{x\in Z(R): |V(Rx)\cap  \mathrm{Min}(R)|\geq 2 \}$. In view of Lemma \ref{vertesetcondition}, we have $Z(R) ^*$ = $ T+\cup_{i=1}^{n} \gamma_i$. Note that by Lemma \ref{edgejiolemmaREDUCE}, the graph   $W\Gamma(T) $ induced by the set $T$ is a complete. Let $z\in T$. Then $V(Rx)\cap \mathrm{Min}\neq V(Rz)\cap  \mathrm{Min}(R)$ and so  by Lemma \ref{edgejiolemmaREDUCE},  $x\sim z$. Now, let  $w\in \gamma_j$, $(j\neq i)$. Then  $V(Rx)\cap \mathrm{Min}(R)\neq V(Rw)\cap \mathrm{Min}(R)$ and so $x\sim w$. Thus, $ T+\cup_{j=1,j\neq i}^{n}\gamma_j \subseteq N(x)$. Since  $\Gamma_i$ is a null graph, with only neighbourhood set $T+\cup_{j=1,j\neq i}^{n}\gamma_j$ and so the set $T+\cup_{j=1,j\neq i}^{n}\gamma_j$  isolate the vertices of graph $\Gamma_i$.  To prove the minimality of the set $T+\cup_{j=1,j\neq i}^{n}\gamma_j$, let $A\subsetneq T+\cup_{j=1,j\neq i}^{n}\gamma_j$ be the cut set of the graph $W\Gamma(R)$. Then $A$ separates the graph $W\Gamma(R)$ into at least two parts, say $X$ and $Y$. Let $x\in X$ and $y\in Y$. 
    
First suppose $x\in \gamma_i$ and $y\in \gamma_j$ for some $i,j\in \{1,2,\dots,n\}$.
If  $i\neq j$, then $V(Rx)\cap  \mathrm{Min}(R)\neq V(Ry)\cap  \mathrm{Min}(R)$, so by Lemma \ref{edgejiolemmaREDUCE}, $x\sim y$,  which is not possible. If $ i=j$, then $x,y\in \gamma_i$. By using the similar argument used earlier, we have $x\not\sim y$ in $W\Gamma(R)$.


 For any  $z\in T$,   by Lemma \ref{edgejiolemmaREDUCE}, note that $x\sim z\sim  y$. It follows that $T\subseteq A$. Now let $w\in \gamma_k$ for some $k\in\{1,2,\ldots,n\}\setminus\{i\}$. Then ${\rm ann}(w)=\cap_{l=1,l\neq k}^{n}\mathfrak{p_l}$. Since ${\rm ann}(x)={\rm ann}(y)=\cap_{j=1,j\neq i}^{n}\mathfrak{p_j}$. Let $x'\in \cap_{j=1,j\neq i}^{n}\mathfrak{p_j}= {\rm ann}(x)$ and  $w'\in\cap_{l=1,l\neq k}^{n}\mathfrak{p_l}= {\rm ann}(w)$. Then $x'w'=0$ and so $x\sim w\sim y$. Therefore, $\cup_{j=1,j\neq i}^{n}\gamma_j\subseteq A$. Hence,  $T+\cup_{j=1,j\neq i}^{n}\gamma_j\subseteq A$, which contradicts the assumption that $A$ is a cut set.

 Now suppose  $x\in \gamma_i$ for some $i\in \{1,2,\ldots,n\}$ and $y\in T$. Then $|V(Rx)\cap \mathrm{Min}(R)|=1$  and $|V(Ry)\cap \mathrm{Min}(R)|\geq 2$. Thus, by Lemma \ref{edgejiolemmaREDUCE}, we have $x\sim y$. Therefore, $A$ is not a cut set. Finally, suppose $x\in T$ and $y\in T$, then $ |V(Rx)\cap  \mathrm{Min}(R)|\geq 2$ and $ |V(Ry)\cap  \mathrm{Min}(R)|\geq 2$. By Lemma \ref{edgejiolemmaREDUCE}, we have $x\sim y$. Therefore, $A$ can not be a cut set. Thus, $A\subseteq Z(R)^{*}\setminus\{\gamma_i\}$ is not a cut set. Hence, for each   $1\leq i\leq n$, $Z(R)^{*}\setminus \{\gamma_i\}$ is a cut set.
\end{proof}
         
\begin{corollary} For $i\in S$, the set $Z(R)^*\setminus\ \{\gamma_i\}$ is a cut set of $W\Gamma(R)$ of size $|Z(R)^*|-\alpha_{max}$.  Consequently, $\kappa(W\Gamma(R))\leq|Z(R)^*|-\alpha_{max}$. 
\end{corollary}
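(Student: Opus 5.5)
The corollary follows directly from Theorem \ref{cutsetinreduced}; only a cardinality count is needed. Fix $i\in S$, so that $|\gamma_i|=\alpha_{\max}$. By Theorem \ref{cutsetinreduced}, the set $Z(R)^*\setminus\gamma_i$ (written $Z(R)^*\setminus\{\gamma_i\}$ in the statement) is a cut set of $W\Gamma(R)$. This uses the standing assumptions of the section, namely that $R$ is reduced with $|\mathrm{Min}(R)|=n\geq 3$.

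The next step is to compute the size of this cut set. Each $\gamma_i$ is contained in $\mathfrak{p}_i$. Moreover, $0\notin\gamma_i$, because $0$ lies in every $\mathfrak{p}_j$ with $j\neq i$ and $n\geq 2$. Every element of $\gamma_i$ lies in $\mathfrak{p}_i\subseteq Z(R)=\bigcup_{j}\mathfrak{p}_j$, so $\gamma_i\subseteq Z(R)^*$. It follows that
$$
|Z(R)^*\setminus\gamma_i|=|Z(R)^*|-|\gamma_i|=|Z(R)^*|-\alpha_{\max}.
$$
If $Z(R)$ is infinite, this expression should be read as the cardinality of the complement. In that case the inequality still holds in the sense of cardinals, because the argument exhibits a cut set of exactly that size. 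Since this is just a restatement, there is no real difficulty here.

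Finally, by definition $\kappa(W\Gamma(R))$ is the cardinality of a minimum cut set, so $\kappa(W\Gamma(R))\leq|B|$ for every cut set $B$. Taking $B=Z(R)^*\setminus\gamma_i$ gives $\kappa(W\Gamma(R))\leq |Z(R)^*|-\alpha_{\max}$. Choosing $i\in S$ makes this the smallest of the bounds obtained from the cut sets of Theorem \ref{cutsetinreduced}, since $\alpha_{\max}$ is the largest of the $|\gamma_j|$.

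The only step needing care is the verification that $\gamma_i\subseteq Z(R)^*$, so that the subtraction is exact. This is immediate from $n\geq 2$ and $Z(R)=\bigcup_j\mathfrak{p}_j$ for a reduced ring with finitely many minimal primes.
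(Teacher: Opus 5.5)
Your proposal is correct and matches the paper's reasoning. The paper states this corollary without a separate proof, as an immediate consequence of Theorem~\ref{cutsetinreduced} together with $\gamma_i\subseteq Z(R)^*$, $|\gamma_i|=\alpha_{\max}$ for $i\in S$, and the definition of $\kappa$.

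One small point: the cut-set claim also depends on the section's other standing hypothesis, $\alpha_{\max}\geq 2$, which you should list alongside $n\geq 3$. If $|\gamma_i|=1$, removing $Z(R)^*\setminus\gamma_i$ leaves a single vertex, so nothing is separated.
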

\begin{lemma}\label{degrrinreduce}
  For $i\in S$,  every vertex contained in the set $\gamma_i$ is of the degree $|Z(R)^*|-\alpha_{max}$. As a consequence,  $\delta(W\Gamma(R))\leq |Z(R)^*|-\alpha_{max}$.
\end{lemma}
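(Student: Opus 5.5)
Fix $i\in S$ and a vertex $x\in\gamma_i$. The plan is to compute $N(x)$ exactly and show $N(x)=Z(R)^*\setminus\gamma_i$. Its cardinality is then $|Z(R)^*|-|\gamma_i|=|Z(R)^*|-\alpha_{max}$, since $\gamma_i\subseteq Z(R)^*$ and $|\gamma_i|=\alpha_{max}$ because $i\in S$. The bound $\delta(W\Gamma(R))\leq \deg(x)$ then gives the consequence at once. (Here $\gamma_i$ is nonempty, since $|\gamma_i|=\alpha_{max}\geq 2$.)

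First, by Lemma \ref{vertesetcondition}, $V(Rx)\cap\mathrm{Min}(R)=\{\mathfrak{p_i}\}$.

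For the inclusion $N(x)\subseteq Z(R)^*\setminus\gamma_i$, take $y\in\gamma_i$ with $y\neq x$. The argument already used in the proof of Theorem \ref{cutsetinreduced} applies. Both ${\rm ann}(x)$ and ${\rm ann}(y)$ equal $\bigcap_{j\neq i}\mathfrak{p_j}$, and no element of this set lies in $\mathfrak{p_i}$. Since $\mathfrak{p_i}$ is prime, a product of two such elements lies outside $\mathfrak{p_i}$, so it is nonzero, and hence $x\not\sim y$. Equivalently, we can invoke Lemma \ref{edgejiolemmaREDUCE} directly: $x$ and $y$ have the same singleton $V(\cdot)\cap\mathrm{Min}(R)$, so they are not adjacent. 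Of course $x\notin N(x)$ as well.

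For the reverse inclusion, take $y\in Z(R)^*\setminus\gamma_i$ and split into two cases by Lemma \ref{vertesetcondition}.
\begin{enumerate}[label=(\alph*)]
\item $|V(Ry)\cap\mathrm{Min}(R)|\geq 2$. Then $V(Ry)\cap\mathrm{Min}(R)\neq\{\mathfrak{p_i}\}=V(Rx)\cap\mathrm{Min}(R)$, so $x\sim y$ by Lemma \ref{edgejiolemmaREDUCE}.
\item $y\in\gamma_j$ for some $j\neq i$. Then $V(Ry)\cap\mathrm{Min}(R)=\{\mathfrak{p_j}\}\neq\{\mathfrak{p_i}\}$, so again $x\sim y$.
\end{enumerate}
These cases are exhaustive because $Z(R)^*=T\cup\bigcup_k\gamma_k$, as noted in the proof of Theorem \ref{cutsetinreduced}. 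Here we use that every nonzero zero-divisor lies in some minimal prime, so $|V(Ry)\cap\mathrm{Min}(R)|\geq 1$.

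No step is a genuine obstacle. The only point needing care is that the degree computation is exact rather than a bound: it requires the full set equality $N(x)=Z(R)^*\setminus\gamma_i$, and it uses the correct reading of the adjacency criterion in Lemma \ref{edgejiolemmaREDUCE}, applied with the roles of $x$ and $y$ symmetric.
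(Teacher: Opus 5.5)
Your proof is correct and follows the paper's argument essentially verbatim: both show $N(x)=Z(R)^*\setminus\gamma_i$ by splitting $y\in Z(R)^*$ into three cases. If $|V(Ry)\cap\mathrm{Min}(R)|\geq 2$, or if $V(Ry)\cap\mathrm{Min}(R)=\{\mathfrak{p_j}\}$ with $j\neq i$, then $x\sim y$ by Lemma \ref{edgejiolemmaREDUCE}; if $j=i$, then $x\not\sim y$ because ${\rm ann}(x)={\rm ann}(y)=\bigcap_{j\neq i}\mathfrak{p_j}$ and $\mathfrak{p_i}$ is prime.

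One cosmetic fix: in that last step only the \emph{nonzero} elements of $\bigcap_{j\neq i}\mathfrak{p_j}$ avoid $\mathfrak{p_i}$, and adjacency must be read with nonzero annihilating elements. Your direct appeal to Lemma \ref{edgejiolemmaREDUCE} makes this point moot anyway.
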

\begin{proof} Assume that $ \mathrm{Min}(R)=\{\mathfrak{p_i}| i\in\{1,2,\ldots,n\}\}$.
    Let $i\in S$ and $x\in \gamma_i$. Then $V(Rx)\cap \mathrm{Min}(R))=\{\mathfrak{p_i}\}$.  Note that ${\rm ann}(x)=\cap_{j=1,j\neq i}^{n}\mathfrak{p_j}$.  Suppose that $y\in Z(R)^*$. Then $|V(Ry)\cap  \mathrm{Min}(R)|=k$. First, suppose $k\geq 2$. Then by Lemma \ref{edgejiolemmaREDUCE}, we have $x\sim y$.
      Now, suppose $k=1$ such that $|V(Ry)\cap  \mathrm{Min}(R)|=\{\mathfrak{p_j}\}$, where $j\in \{1,2, \ldots,n\}$. If $i\neq j$, then 
     by Lemma \ref{edgejiolemmaREDUCE}, we have $x\sim y$.  If $i=j$, then $V(Ry)\cap  \mathrm{Min}(R)=\{\mathfrak{p_i}\}$ and so  ${\rm ann}(y)=\cap_{j=1,j\neq i}^{n}\mathfrak{p_j}$.  Let $x'\in {\rm ann}(x)=\cap_{j=1,j\neq i}^{n}\mathfrak{p_j}$ and $y'\in {\rm ann}(y)=\cap_{j=1,j\neq i}^{n}\mathfrak{p_j}$. Then $x',y'\not\in\mathfrak{p_i}$ and so $x'y'\neq0 $. Therefore, $x\not\sim y$. Consequently,  $deg(x)=|Z(R)^*|-\alpha_{max}$.
      \end{proof}
   

The following theorem determines the vertex connectivity of the graph $W\Gamma(R)$. 
\begin{theorem}
  Let $R$ be a reduced ring such that $ |\mathrm{Min}(R)|=n\geq 3$. Then for $i\in S$, $Z(R)^*\setminus\ \{\gamma_i\}$ is a minimum cut set of $ W\Gamma(R)$. If $R$ is finite, then $$\kappa(W\Gamma(R))=|Z(R)^*|-\alpha_{max}=\delta(W\Gamma(R)).$$
\end{theorem}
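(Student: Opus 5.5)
Everything is assembled from results already in this section, so the proof is a chain of inequalities. First, by the earlier theorem for $|\mathrm{Min}(R)|=n\geq 3$, we have $\kappa(W\Gamma(R))=\delta(W\Gamma(R))$. Second, by the Corollary following Theorem \ref{cutsetinreduced}, for $i\in S$ the set $Z(R)^*\setminus\gamma_i$ is a cut set of size $|Z(R)^*|-\alpha_{max}$. Third, Lemma \ref{degrrinreduce} gives $\delta(W\Gamma(R))\leq |Z(R)^*|-\alpha_{max}$. What remains is the reverse inequality $\delta(W\Gamma(R))\geq |Z(R)^*|-\alpha_{max}$, that is, every vertex has at least $|Z(R)^*|-\alpha_{max}$ neighbours.

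For this I would compute the non-neighbourhood of an arbitrary vertex $y\in Z(R)^*$ using Lemma \ref{edgejiolemmaREDUCE} and Lemma \ref{vertesetcondition}. There are two cases.
\begin{enumerate}[label=\textup{(\alph*)}]
\item If $|V(Ry)\cap \mathrm{Min}(R)|\geq 2$, i.e.\ $y\in T$, then $y$ is adjacent to every other vertex. Hence $\deg(y)=|Z(R)^*|-1\geq |Z(R)^*|-\alpha_{max}$, since $\alpha_{max}\geq 1$.
\item If $|V(Ry)\cap \mathrm{Min}(R)|=1$, then $y\in\gamma_j$ for some $j$. Lemma \ref{edgejiolemmaREDUCE} shows that $y$ is adjacent to all of $T$ and all of $\gamma_k$ for $k\neq j$. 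The annihilator argument of Theorem \ref{cutsetinreduced} shows that $y$ is not adjacent to anything in $\gamma_j$. So $\deg(y)=|Z(R)^*|-|\gamma_j|\geq |Z(R)^*|-\alpha_{max}$.
\end{enumerate}
These two cases give $\delta(W\Gamma(R))=|Z(R)^*|-\alpha_{max}$ whenever the quantities are finite.

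Combining, in the finite case the cut set $Z(R)^*\setminus\gamma_i$, $i\in S$, has cardinality $|Z(R)^*|-\alpha_{max}=\delta(W\Gamma(R))=\kappa(W\Gamma(R))$. Since no cut set is smaller than $\kappa$, it is a minimum cut set, and the displayed equality follows. For the first assertion without finiteness, the same comparison is done with cardinals. Any cut set has size at least $\kappa(W\Gamma(R))=\delta(W\Gamma(R))$, and $\delta$ is attained by vertices of $\gamma_i$, $i\in S$, whose neighbourhood is exactly $Z(R)^*\setminus\gamma_i$. So the neighbourhood of a minimum-degree vertex, which is a cut set by Theorem \ref{cutsetinreduced}, realizes $\kappa$.

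The main obstacle is the infinite case. There the subtraction $|Z(R)^*|-\alpha_{max}$ is not meaningful, and I would interpret $\alpha_{max}$ as a maximum over the finitely many cardinals $|\gamma_i|$. The argument then runs via neighbourhood sets directly: any cut set $B$ satisfies $|B|\geq\delta(W\Gamma(R))=|N(x)|$ for $x\in\gamma_i$, $i\in S$, rather than via arithmetic of sizes. I would also check that the earlier theorem $\kappa=\delta$ really covers infinite rings, since its proof only uses $|A|<\delta$ to find a vertex of $N(w)$ outside $A$, which works for cardinals.
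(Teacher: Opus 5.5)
Your proof is correct, but it gets the lower bound on cut sets from a different source than the paper does. You import the earlier theorem $\kappa(W\Gamma(R))=\delta(W\Gamma(R))$. The only new work is then a degree count: $\deg(y)=|Z(R)^*|-1$ for $y\in T$ and $\deg(y)=|Z(R)^*|-|\gamma_j|$ for $y\in\gamma_j$. This gives $\delta(W\Gamma(R))=|Z(R)^*|-\alpha_{max}$, so $Z(R)^*\setminus\gamma_i$, $i\in S$, has the least possible size.

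The paper does not invoke $\kappa=\delta$ here. It takes a minimum cut set $X$ and vertices $x,y$ in different components of $W\Gamma(R)-X$. Since $x\not\sim y$, Lemma~\ref{edgejiolemmaREDUCE} forces $x,y\in\gamma_i$ for a common $i$. Every vertex of $Z(R)^*\setminus\gamma_i$ is a common neighbour of $x$ and $y$, so it must lie in $X$. Thus $|X|\geq|Z(R)^*\setminus\gamma_i|\geq|Z(R)^*|-\alpha_{max}$. The equality with $\delta$ then follows from the sandwich $\kappa\leq\delta\leq|Z(R)^*|-\alpha_{max}$, given by Proposition~\ref{minimumdegree+vertex} and Lemma~\ref{degrrinreduce}.

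What the paper's route buys:
\begin{itemize}
\item It is more elementary and self-contained. It avoids the earlier connectivity theorem, which is the heavier result, and whose written proof needs minor repair: its inclusion $N(w)\subseteq N(x)$ fails by the vertex $x$ itself when $x\in T$.
\item It proves more. For finite $R$, every minimum cut set is of the form $Z(R)^*\setminus\gamma_i$ with $i\in S$.
\item It covers infinite $R$ by comparing the cardinals $|Z(R)^*\setminus\gamma_i|$ directly, so your check that $\kappa=\delta$ survives for cardinals becomes unnecessary.
\end{itemize}

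Your own infinite-case claim, that $\delta$ is attained on $\gamma_i$ for $i\in S$, needs exactly this cardinal comparison, and you only argued it with subtraction. The missing step is short: for $j\neq i$, $Z(R)^*\setminus\gamma_i$ injects into $Z(R)^*\setminus\gamma_j$, since $|\gamma_j|\leq|\gamma_i|$.

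What your route buys in return is the complete degree sequence. That is precisely what the paper's later characterization of the minimum-degree vertices relies on.
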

\begin{proof} Let $\mathrm{Min}(R)=\{\mathfrak{p_i}| i\in\{1,2,\ldots,n\}\}$.
By Theorem \ref{cutsetinreduced}, for every $i\in S$, $Z(R)^*\setminus\ \{\gamma_i\}$ is a cut set of $W\Gamma(R)$. Let $X$ be the minimum cut set of $W\Gamma(R)$. Then $X$ separate the graph into two subgraphs, say $A$ and $B$. Let $x\in A$ and  $y\in B$. Then  by Lemma \ref{edgejiolemmaREDUCE},  we have $V(Rx)\cap \mathrm{Min}(R)=V(Ry)\cap \mathrm{Min}(R)$ and $|V(Rx)\cap \mathrm{Min}(R)|=1$. Without loss of generality, assume that  $V(Rx)\cap\mathrm{Min}(R)=\{\mathfrak{p_i}\}=V(Ry)\cap \mathrm{Min}(R)$.  Now define the set $T=\{x\in Z(R)^*: |V(Rx)\cap \mathrm{Min}(R)|\geq2\}$. By Lemma \ref{vertesetcondition}, we have $Z(R)^*=T+\cup_{i=1}^{n}\gamma_i$. For any $z\in T$, we have $|V(Rz)\cap \mathrm{Min}(R)|\geq2$. Then by Lemma \ref{edgejiolemmaREDUCE}, we get $x\sim z\sim y$. Thus, $z\in X$ and so $T\subseteq X$. Now, let $w\in \gamma_i$, where $i\in\{1,2, \ldots,n\}$. For $i\neq j$. observe that  $V(Rw)\cap\mathrm{Min}(R)=\{\mathfrak{p_j}\}$. By Lemma \ref{edgejiolemmaREDUCE}, we obtain $x\sim w$. Similarly, $y\sim w$.
Therefore,  $Z(R)^{*}\setminus\{\gamma_i\}$ is contained in  $ X$ and so $|Z(R)^*|-\alpha_{max}\geq \kappa(W\Gamma(R))=|X|\geq |Z(R)^{*}\setminus \{\gamma_i\})|\geq  |Z(R)^*|-\alpha_{max}$. This gives $\kappa(W\Gamma(R))= |Z(R)^*|-\alpha_{max}$. This completes our proof.
\end{proof}

\begin{example} Let  $R=\mathbb{Z}_3 \times \mathbb{Z}_6$ be a reduced ring. The zero divisor graph of  $R$ has been considered in {\cite[Example 3.7]{cote2011cut}}. 
The minimal prime ideals of $R$ are 
$$
\mathfrak{p}_1 = \{(0,0),(0,1),(0,2),(0,3),(0,4),(0,5)\}$$
$$\mathfrak{p}_2 = \{(0,0),(1,0),(2,0),(0,2),(1,2),(2,2)\}$$
$$\mathfrak{p}_3 = \{(0,0),(1,0),(2,0),(0,3),(1,3),(2,3)\}
.$$ Additionally, ${\gamma_1=\{(0,1),(0,5)\}}$, ${\gamma_2=\{(1,2),(2,2),\allowbreak(1,4),(2,4)\}}$ and ${\gamma_3=\{(1,3),(2,3)\}}$. By Fig. 1, the cut sets of $W\Gamma(R)$ are  $A_1=\{(0,2),(0,3),(0,4),(1,0),(1,2),(1,3),(1,4),\allowbreak (2,0), (2,2), (2,3),(2,4)\}$,  $A_2=\{(0,1),(0,2),(0,3),(0,4),(0,5),(1,0),(1,3),(2,0),(2,3)\} $ and $A_3=\{(0,1),(0,2),(0,3),(0,4),(0,5),(1,0),(1,2),(1,4),(2,0),(2,2), (2,4)\}$. One can observe that $A_1 =Z(R)^*\setminus\{\gamma_1\}$, $A_2=Z(R)^*\setminus\{\gamma_2\}$ and $A_3 =Z(R)^*\setminus\{\gamma_3\}$. Moreover,  $\kappa(W\Gamma(R))=|A_2|=|Z(R)^*\setminus\{\gamma_2\}|=9.$
  
\end{example}

  \begin{figure}[h!]
 			\centering
 			\includegraphics[width=0.7 \textwidth]{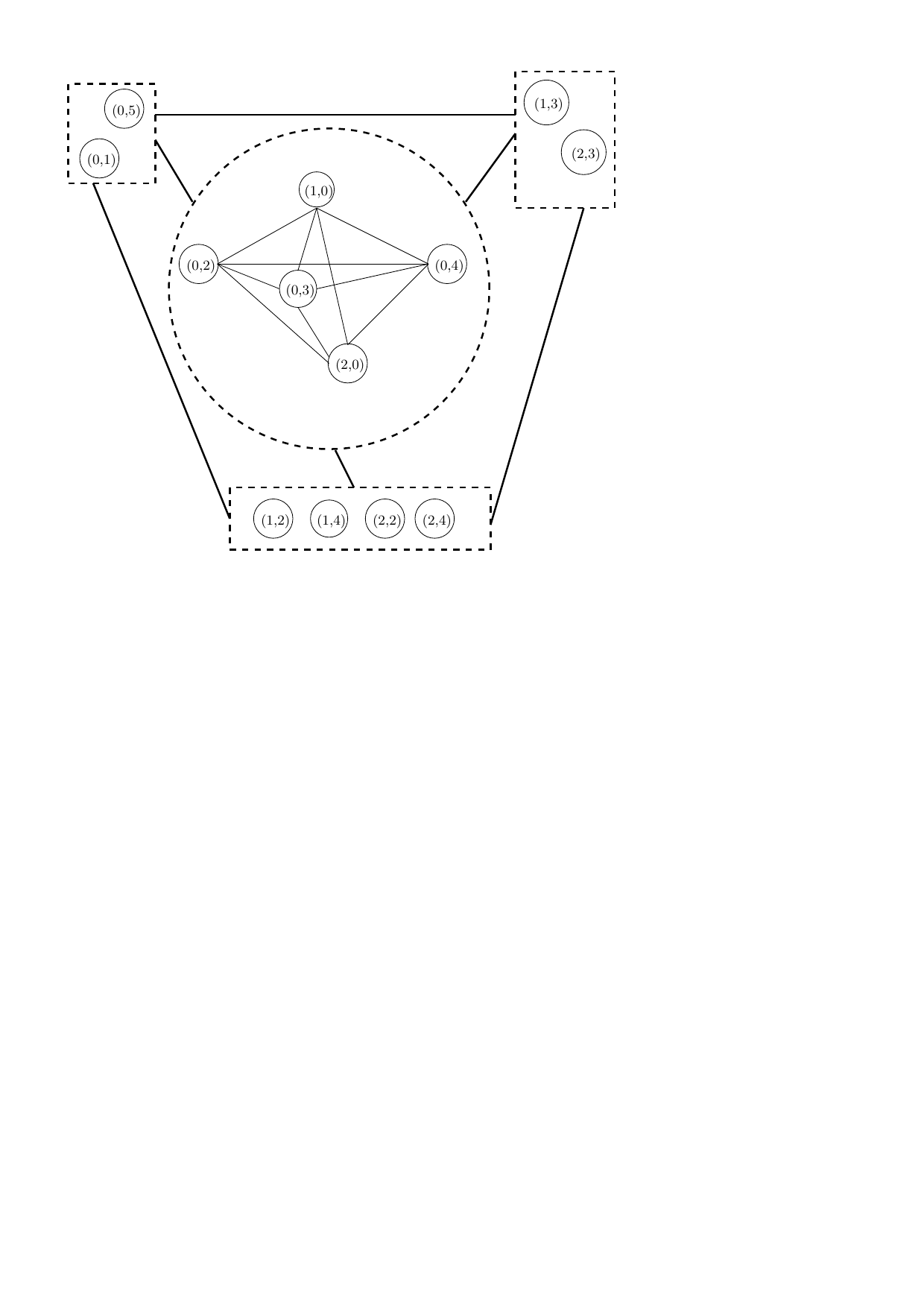}
 			\caption{ $\textbf{}$ $W\Gamma(\mathbb{Z}_3\times \mathbb{Z}_6)$ }
    \label{fig2.png}
 \end{figure}

\begin{example}
 Let    $
R=\mathbb{Z}_3\times\mathbb{Z}_3\times\mathbb{Z}_5\times\mathbb{Z}_7
$ be a reduced ring. Then the minimal prime ideals of \(R\) are $
\mathfrak{p_1}=\{0\}\times\mathbb{Z}_3\times\mathbb{Z}_5\times\mathbb{Z}_7$,
$\mathfrak{p_2}=\mathbb{Z}_3\times\{0\}\times\mathbb{Z}_5\times\mathbb{Z}_7$,
$\mathfrak{p_3}=\mathbb{Z}_3\times\mathbb{Z}_3\times\{0\}\times\mathbb{Z}_7$,
$\mathfrak{p_4}=\mathbb{Z}_3\times\mathbb{Z}_3\times\mathbb{Z}_5\times\{0\}.
$ For $1\leq i\leq 4$, $\gamma_i=\mathfrak{p_i}\setminus\cup_{j\neq i}\mathfrak{p_j}$.
Moreover,
$\gamma_1
=\{(0,a,b,c):a\in\mathbb{Z}_3^*,\,
b\in\mathbb{Z}_5^*,\,
c\in\mathbb{Z}_7^*\},
$$\gamma_2=\{(a,0,b,c):a\in\mathbb{Z}_3^*,\,
b\in\mathbb{Z}_5^*,\,
c\in\mathbb{Z}_7^*\},$$\gamma_3
=\{(a,b,0,c):a,b\in\mathbb{Z}_3^*,\,
c\in\mathbb{Z}_7^*\}
$ and $\gamma_4
=\{(a,b,c,0):a,b\in\mathbb{Z}_3^*,\,
c\in\mathbb{Z}_5^*\}.$ Note that $\alpha_{max}=|\gamma_1|= |\gamma_2|$. Therefore, $Z(R)^*\setminus\{\gamma_1\}$ and  $Z(R)^*\setminus\{\gamma_2\}$ are the minimum cut sets and so $\kappa(W\Gamma(R))= |Z(R)^*|-|\gamma_1|=218-48=170$.
\end{example}

  \begin{lemma}\label{minimumdegree} Suppose $R$ is a reduced ring such that $ |\mathrm{Min}(R)|=n\geq 3$. Let $x\in Z(R)^*$ such that $deg(x)=\delta(W\Gamma({R})$. Then $|V(Rx)\cap  \mathrm{Min}(R)|=1$.
    \end{lemma}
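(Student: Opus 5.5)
We argue by contraposition: we show that a vertex $x$ with $|V(Rx)\cap \mathrm{Min}(R)|\geq 2$ cannot attain the minimum degree. Throughout, we use the standing assumption of this section that $\alpha_{max}\geq 2$.

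First, suppose $x\in Z(R)^*$ satisfies $|V(Rx)\cap \mathrm{Min}(R)|\geq 2$. By Lemma \ref{edgejiolemmaREDUCE}, $x$ is adjacent to every other vertex $y$: the second alternative of that lemma holds for the pair $(x,y)$, since it only requires $|V(Rx)\cap\mathrm{Min}(R)|\geq 2$. Hence $\deg(x)=|Z(R)^*|-1$.

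Next, we compare this with the minimum degree. By Lemma \ref{degrrinreduce}, every vertex of $\gamma_i$ with $i\in S$ has degree $|Z(R)^*|-\alpha_{max}$, so $\delta(W\Gamma(R))\leq |Z(R)^*|-\alpha_{max}$. Since $\alpha_{max}\geq 2$, this gives $\delta(W\Gamma(R))\leq |Z(R)^*|-2<|Z(R)^*|-1=\deg(x)$. Therefore $x$ is not a vertex of minimum degree.

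By contraposition, any vertex $x$ with $\deg(x)=\delta(W\Gamma(R))$ must satisfy $|V(Rx)\cap \mathrm{Min}(R)|\leq 1$. Finally, we rule out the value $0$. Since $x$ is a nonzero zero-divisor and $Z(R)=\cup_{i=1}^{n}\mathfrak{p_i}$ (because $R$ is reduced), $x$ lies in some $\mathfrak{p_i}$. Then $Rx\subseteq \mathfrak{p_i}$, so $\mathfrak{p_i}\in V(Rx)\cap\mathrm{Min}(R)$ and the intersection is nonempty. Hence $|V(Rx)\cap \mathrm{Min}(R)|=1$, as claimed.

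The main point needing care is the cardinality comparison, which is only meaningful when $R$ (equivalently $Z(R)^*$) is finite. If $Z(R)^*$ is infinite, $|Z(R)^*|-1$ and $|Z(R)^*|-\alpha_{max}$ are the same cardinal, and the argument cannot separate degrees by counting. In that case I would compare non-neighbourhoods instead. A vertex $x$ with $|V(Rx)\cap\mathrm{Min}(R)|\geq 2$ has no non-neighbours other than itself. A vertex of $\gamma_i$ is non-adjacent to all of $\gamma_i$ minus itself, so it has $|\gamma_i|-1\geq 1$ non-neighbours when $i\in S$. So the conclusion holds at least in the sense of a strictly larger non-neighbourhood, and under the finiteness assumption used in the preceding theorem it holds literally as stated.
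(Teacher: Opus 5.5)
Your proof is correct and follows the paper's argument: a vertex with $|V(Rx)\cap\mathrm{Min}(R)|\ge 2$ is adjacent to every other vertex by Lemma~\ref{edgejiolemmaREDUCE}, so its degree $|Z(R)^*|-1$ exceeds $\delta(W\Gamma(R))$. You also make explicit several points the paper leaves implicit: the strict inequality comes from Lemma~\ref{degrrinreduce} together with the standing hypothesis $\alpha_{max}\ge 2$, the value $0$ is excluded because $x$ lies in some minimal prime, and finiteness is needed. The finiteness caveat is genuinely necessary: for $R=\mathbb{Z}_2\times\mathbb{Z}_2\times\mathbb{Q}$ every vertex has degree $\aleph_0$, so $(0,0,1)$ has minimum degree although it lies in two minimal primes.
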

\begin{proof} Let $ \mathrm{Min}(R)=\{\mathfrak{p_i}| i\in\{1,2,\ldots,n\}\}$. On the contrary, suppose that  $|V(Rx)\cap \mathrm{Min}(R)|=k\geq2$. 
Let $y\in Z(R)^*$. Then $|V(Ry)\cap \mathrm{Min}(R)|\geq1$ and so  by Lemma \ref{edgejiolemmaREDUCE}, we have $x\sim y.$
Consequently, $deg(x)= |Z(R)^*|-1> \delta(W\Gamma(R))$, a contradiction. Hence, $|V(Rx)\cap \mathrm{Min}(R)|=1$.
\end{proof}

The following theorem characterizes all the vertices of minimum degree of the graph $W\Gamma(R)$.
\begin{theorem}
     Let $R$ be a reduced ring with  $ |\mathrm{Min}(R)|=n\geq 3$. Then $deg(x)=\delta(W\Gamma(R))$ if and only if $x\in \gamma_i$ and $i\in S$.
  \end{theorem}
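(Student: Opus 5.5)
The plan is to compute the degree of every vertex explicitly and then compare, using Lemma \ref{edgejiolemmaREDUCE}, Lemma \ref{vertesetcondition} and the degree computation in Lemma \ref{degrrinreduce}. By Lemma \ref{vertesetcondition}, $Z(R)^*$ is the disjoint union of $T=\{x\in Z(R)^*: |V(Rx)\cap \mathrm{Min}(R)|\geq 2\}$ and the sets $\gamma_1,\ldots,\gamma_n$. So it suffices to know the degree of a vertex in $T$ and of a vertex in each $\gamma_i$.

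\textbf{Degrees of the two kinds of vertex.} If $x\in T$, then Lemma \ref{edgejiolemmaREDUCE} makes $x$ adjacent to every other vertex, so $\deg(x)=|Z(R)^*|-1$. Now let $x\in\gamma_i$ for an arbitrary $i$, not only $i\in S$. The argument of Lemma \ref{degrrinreduce} still applies. The vertex $x$ is adjacent to every vertex of $T$ and of $\gamma_j$ for $j\neq i$. It is not adjacent to any vertex of $\gamma_i$, because those vertices have $\mathrm{ann}$ equal to $\cap_{j\neq i}\mathfrak{p_j}$, which is not contained in $\mathfrak{p_i}$, so no product of annihilators vanishes. Hence $\deg(x)=|Z(R)^*|-|\gamma_i|$.

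\textbf{Identifying the minimum.} Since $\alpha_{max}\geq 2$ (the standing assumption), the minimum degree is $\delta(W\Gamma(R))=|Z(R)^*|-\alpha_{max}$, which is strictly less than $|Z(R)^*|-1$. This agrees with Lemma \ref{degrrinreduce} and the preceding theorem. Even without that assumption, Lemma \ref{minimumdegree} already rules out vertices of $T$.

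\textbf{The two directions.} For ($\Leftarrow$): if $x\in\gamma_i$ with $i\in S$, then $\deg(x)=|Z(R)^*|-\alpha_{max}=\delta$ by Lemma \ref{degrrinreduce}. For ($\Rightarrow$): suppose $\deg(x)=\delta$. By Lemma \ref{minimumdegree}, $|V(Rx)\cap\mathrm{Min}(R)|=1$, so $x\in\gamma_i$ for some $i$ by Lemma \ref{vertesetcondition}. The degree formula then gives $|Z(R)^*|-|\gamma_i|=|Z(R)^*|-\alpha_{max}$, so $|\gamma_i|=\alpha_{max}$ and $i\in S$.

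\textbf{Main obstacle.} The only delicate point is the cardinality comparison when $R$ is infinite, because subtracting cardinals is not valid there. I would state the argument for finite $R$, matching the preceding theorem. In the infinite case I would argue through neighbourhoods instead: the non-neighbourhood of $x\in\gamma_i$ is exactly $\gamma_i$, that is, $N(x)=Z(R)^*\setminus\gamma_i$. With this, minimality of the degree amounts to maximality of $|\gamma_i|$, interpreted with care.
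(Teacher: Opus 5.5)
For finite $R$ your argument is correct and is essentially the paper's proof. Lemma~\ref{minimumdegree} together with Lemma~\ref{vertesetcondition} places a minimum-degree vertex in some $\gamma_i$. The formula $\deg(x)=|Z(R)^*|-|\gamma_i|$ holds for every $i$, not only $i\in S$, as you note, and it then forces $|\gamma_i|=\alpha_{max}$. Your explicit list of all degrees also gives $\delta(W\Gamma(R))=|Z(R)^*|-\alpha_{max}$ directly, whereas the paper takes this value from the preceding theorem, which is stated only for finite $R$.

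Two of your side remarks need correcting.

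\textbf{The aside on Lemma~\ref{minimumdegree}.} You claim it rules out vertices of $T$ \emph{even without} $\alpha_{max}\geq 2$; this is false. The strict inequality $|Z(R)^*|-1>\delta(W\Gamma(R))$ in its proof is exactly where $\alpha_{max}\geq 2$ is used. For $R=\mathbb{Z}_2\times\mathbb{Z}_2\times\mathbb{Z}_2$ we have $\alpha_{max}=1$ and $W\Gamma(R)\cong K_6$, so $(1,0,0)\in T$ has minimum degree.

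\textbf{The infinite case.} The obstacle is not just that cardinal subtraction must be handled with care: the statement is false there. Take $R=\mathbb{Q}\times\mathbb{Z}_2\times\mathbb{Z}_2$. Then $\gamma_1=\{(0,1,1)\}$, while $\gamma_2=\{(a,0,1): a\in\mathbb{Q}^*\}$ and $\gamma_3=\{(a,1,0): a\in\mathbb{Q}^*\}$ are countably infinite, so $S=\{2,3\}$. Yet every vertex has degree $\aleph_0$. Hence $(0,1,1)$, with $1\notin S$, attains $\delta(W\Gamma(R))$, and so does $(0,1,0)\in T$, so Lemma~\ref{minimumdegree} also fails. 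The identity $N(x)=Z(R)^*\setminus\gamma_i$ therefore cannot turn minimal degree into maximal $|\gamma_i|$. The correct fix is to add the hypothesis that $R$ is finite, which the paper's own proof uses tacitly when it subtracts cardinals.

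\textbf{A small wording point.} Vertices of $\gamma_i$ are pairwise non-adjacent not because $\bigcap_{j\neq i}\mathfrak{p}_j\not\subseteq\mathfrak{p}_i$. The real reason is that $\mathfrak{p}_i\cap\bigcap_{j\neq i}\mathfrak{p}_j=0$, so every \emph{nonzero} annihilator element lies outside the prime $\mathfrak{p}_i$. Any product of two such elements is then nonzero.
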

\begin{proof}
    Let $ \mathrm{Min}(R)=\{\mathfrak{p_i}| i\in\{1,2,\ldots, n\}\}$. First, suppose $x\in \gamma_i$ and $i\in S$. Then $deg(x)=|Z(R)^*|-\alpha_{max}=\delta(W\Gamma(R))$. Let $x\in\ Z(R)^*$ such that $deg(x)=\delta(W\Gamma(R))$. Then by Lemma \ref{minimumdegree} and by Lemma \ref{vertesetcondition}, we have $x\in\gamma_i$.
    Now, suppose $i\not\in S$. Then we have  $|\gamma_i|<\alpha_{max}$. Since $x\in \mathfrak{\gamma_i}$ and  $i\not\in S$, we have $deg(x)=|Z(R)^*|-|\gamma_i|> |Z(R)^*|-\alpha_{max}= \delta(W\Gamma(R))$, a contradiction. Thus, $i\in S$.
\end{proof}

\section{Artinian rings} 
Let $R$ be an Artinian ring. Then by  {\cite[Theorem 3.1.4]{bini2002finite}}, $R \cong R_1 \times   R_2\times \cdots \times R_n$, where $R_i$ is an Artinian local ring for every $1 \leq i \leq n$. If each $R_i$ is a non-reduced Artinian local ring, then by Theorem \ref{nikmehr2021weaklycmpt}, the graph $W\Gamma(R)$ is complete. Therefore, in this section, we consider   $R \cong F_1\times F_2\times\cdots\times F_k\times R_{k+1}\times\cdots\times R_n$,
where $F_i$ is a field for every $1 \leq i \leq k$, and $R_j$ is an Artinian local ring satisfying $|\operatorname{Nil}(R_j)^{*}| \neq 0$ for every $k+1 \leq j \leq n$. We write each element $x \in R$ by an ordered  tuple $(x_1,x_2, \ldots, x_k, x_{k+1},x_{k+2}, \ldots, x_n)$.
 For $1\leq i\leq k$, define the sets
$$H_i = \{x \mid x_i = 0 \text{ for exactly one } i,\ \text{and } x_j \in U( R_j) \text{ for all } j(k+1\leq j\leq n)\}.$$

 To prove our main result of this section, the following lemma is essential.

\begin{lemma}\label{nbdlemmaartinian}
Let $R \cong F_1\times F_2\times\cdots\times F_k\times R_{k+1}\times\cdots\times R_n$, where each $F_i$ is a field and every $R_j$ is a non-reduced Artinian local ring. Then for $y\in H_i$ and $x\notin H_j$ for all $j$, $(1\leq j\leq k)$, we have $N(y)\subseteq N(x)$.
 \end{lemma}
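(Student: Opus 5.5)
The plan is to first compute $N(y)$ explicitly for $y\in H_i$, and then show that every vertex of $N(y)$ other than $x$ is adjacent to $x$, splitting on the shape of $x$. Throughout I read adjacency as requiring \emph{nonzero} $w\in{\rm ann}(x)$ and $z\in{\rm ann}(v)$ with $wz=0$; otherwise every pair would be adjacent. I use that in the product ring ${\rm ann}(v)=\prod_l {\rm ann}(v_l)$ coordinatewise. I also use that any nonzero $z\in{\rm ann}(v)$ may be replaced by its single-coordinate projection $z_le_l$, which is still a nonzero element of ${\rm ann}(v)$, where $e_l$ denotes the $l$-th idempotent.

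\emph{Step 1: description of $N(y)$.} For $y\in H_i$, every coordinate of $y$ other than the $i$-th is a nonzero field element or a unit of $R_j$, so ${\rm ann}(y)=F_ie_i$. Hence $y\sim v$ if and only if there is a nonzero $a\in F_i$ and a nonzero $z\in{\rm ann}(v)$ with $a z_i=0$, i.e.\ $z_i=0$. Thus $N(y)$ consists of the vertices $v\neq y$ whose annihilator contains a nonzero $z$ with $z_i=0$. After projecting, we may assume $z=z_le_l$ for a single index $l\neq i$. Now fix $v\in N(y)$ with $v\neq x$ (note $x\neq y$ since $x\notin H_i$), and take such a $z=z_le_l$. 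The goal is to find nonzero $w\in{\rm ann}(x)$ and nonzero $z'\in{\rm ann}(v)$ with $wz'=0$.

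\emph{Case (a): some local coordinate $x_j$ ($k+1\le j\le n$) is a nonunit.} Since $R_j$ is Artinian local, its maximal ideal $\mathfrak m_j$ has nonzero annihilator. Choose $0\neq s\in{\rm ann}(\mathfrak m_j)$ and set $w=se_j$; this lies in ${\rm ann}(x)$ because $x_j\in\mathfrak m_j$.
\begin{itemize}
\item If $l\neq j$, take $z'=z$; then $wz=0$.
\item If $l=j$ and $z_j\in\mathfrak m_j$, then $wz=sz_je_j=0$.
\item If $l=j$ and $z_j$ is a unit, then $v_j=0$. Replace $z$ by $te_j$ for a nonzero $t\in\mathfrak m_j$, which exists because $R_j$ is non-reduced. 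This $te_j$ lies in ${\rm ann}(v)$, and $w\cdot te_j=0$.
\end{itemize}

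\emph{Case (b): all local coordinates of $x$ are units.} Since $x$ is a nonzero zero-divisor, some field coordinate of $x$ vanishes. Because $x\notin H_j$ for every $j$, at least two field coordinates vanish, say $x_a=x_b=0$ with $a\neq b$. Then $e_a,e_b\in{\rm ann}(x)$. Choose $c\in\{a,b\}$ with $c\neq l$, and take $w=e_c$, $z'=z_le_l$; then $wz'=0$. Hence $x\sim v$ in both cases, so $N(y)\subseteq N(x)$. One small point to settle is that $y$ itself may belong to $N(y)$ only trivially: a simple graph has no loops, so it does not.

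The main obstacle is Case (a) when the only available annihilating element of $v$ sits in the same local coordinate $j$ as the nonunit coordinate of $x$. The first thing to try there is the socle element $s\in{\rm ann}(\mathfrak m_j)$, together with the observation that a unit $z_j$ forces $v_j=0$. That observation frees us to choose a nilpotent $z_j$ instead, and this is exactly where the hypothesis that each $R_j$ is non-reduced enters.
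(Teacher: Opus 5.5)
Your proof is correct and takes essentially the paper's route: ${\rm ann}(y)=F_ie_i$ forces every neighbour of $y$ to have a nonzero annihilator vanishing at coordinate $i$, and you then split on whether $x$ has a nonunit local coordinate or two vanishing field coordinates, with your socle element $s\in{\rm ann}(\mathfrak m_j)$ and single-coordinate projections only streamlining the paper's nilpotency-index subcases (they also cover the case $x_{k+1}=0$ with $w$ supported only at coordinate $k+1$, where the paper's $w'$ is zero). One caveat on your closing line: the elements $se_j$ or $e_c$ of ${\rm ann}(x)$ with $j,c\neq i$ show that $x\in N(y)$ always, and $x\notin N(x)$, so the literal inclusion $N(y)\subseteq N(x)$ fails; what you (and the paper) actually prove is $N(y)\setminus\{x\}\subseteq N(x)$, which is exactly what your restriction to $v\neq x$ gives, and the vertex that needed settling was $x$, not $y$.
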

\begin{proof} Let $x=(x_1,x_2,\ldots,x_n)$ and $y=(y_1,y_2,\ldots,y_n)$ such that $y\in H_i$ and $x\notin H_j$ for all $j$, $(1\leq j\leq k)$. Since $y\in H_i$, it follows that ${\rm ann}(y)=\{(0,0,\ldots, u_i,\ldots, 0)\mid u_i\in F_i^*\})$. Suppose $z\in N(y)$. Then there exists $w=(w_1,w_2,\ldots,w_k\allowbreak,\ldots,w_n)\in {\rm ann}(z)$ and $x'\in {\rm ann}(y)$ such that $wx'=0$. Consequently, $w_i=0$.
 Since $x\not\in H_j$, for all $j(1\leq j\leq k)$, then we have the following two cases.
 
 \textbf{Case-1}   $x_j=0$ for at least two $j\in \{1,2, \ldots,k\}$. Without loss of generality, assume that $x_s=0$ and $x_t=0$. Let $v=(0,0,\ldots,v_s,0,0,\ldots,0)$ and $v'=(0,0,\ldots,v_t,0,0,\ldots,0) $. Note that $v,v'\in {\rm ann}(x)$. If either $w_s=0$ or $ w_t=0$, then either $vw=0$ or $v'w=0$. Therefore  $z\in N(x)$. Suppose  $w_s\neq0$ and $ w_t\neq 0$. Observe that  $b=(w_1,w_2,\ldots,w_{s-1},0,w_{s+1}\allowbreak\ldots,w_n)\in {\rm ann}(z)$ and $b'=(w_1,w_2,\ldots,w_{t-1},0,w_{t+1}\allowbreak,\ldots,w_n)\in {\rm ann}(z)$. Clearly, either $vb=0$ or  $v'b'=0$. Thus, $z\in N(x)$.

 \textbf{Case-2}  $x_l\notin U(R_l)$ for some $l\in\{k+1,\ldots,n\}$. Without loss of generality, assume that  $x_{k+1}\in \mathrm{Nil}(R_{k+1})$. First suppose $x_{k+1}=0$. Then $c=(0,0,\ldots,v_{k+1},0,0,\ldots,0)\in {\rm ann}(x) $. Notice that $w'=(w_1,w_2,\ldots,w_{k},0,w_{k+2}\allowbreak\ldots,w_n)\in {\rm ann}(z)$ and . Clearly $cw'=0$ and so  $z\in N(x)$. Now suppose  $x_{k+1}\in \mathrm{Nil}(R_{k+1})^*$ and let $q$ be the index of nilpotence of $x_{k+1}$. Note that  $c'=(0,0,\ldots,{x_{k+1}}^{q-1},0,\ldots,0) \in {\rm ann}(x)$. If $w_{k+1}=0$, then  $wc'=0 $. Thus  $z\in N(x)$.
   If $w_{k+1}\neq 0$, then either $w_{k+1}\in \mathrm{Nil}(R_{k+1})^*$ or $w_{k+1}\in U(R_{k+1})$. First, suppose  $w_{k+1}\in \mathrm{Nil}(R_{k+1})^*$ and $r$ be the index of nilpotence of $w_{k+1}$.   Then clearly,  $d=(w_1,w_2,\ldots,\allowbreak w_k,{w_{k+1}}^{r-1},\ldots,w_n)\in {\rm ann}(z)$. If ${w_{k+1}}^{r-1}{x_{k+1}}^{q-1}=0$, then $dc'=0$ and so $z\in N(x)$. If ${w_{k+1}}^{r-1}{x_{k+1}}^{q-1}\neq0$. Consider $e=(0,0,\ldots{w_{k+1}}^{r-1}{x_{k+1}}^{q-1},0,\ldots,0)$. Clearly  $e \in {\rm ann}(x)$ and  $ed=0$. Therefore, $z\in N(x)$.
We now suppose that ${w_{k+1}}\in U(R_{k+1})$. Observe that
   $d'=(w_1,w_2,\ldots, w_k,{x_{k+1}}^{q-1},\ldots,w_n)\in {\rm ann}(z)$ and $d'c'=0$. Therefore,  $z\in N(x)$.
   \end{proof}

\begin{theorem}\label{finitering}
Let $R$ be an Artinian ring such that the weakly zero-divisor graph $W\Gamma(R)$ is non-empty. Then $\kappa(W\Gamma(R))= \delta(W\Gamma(R))$.
    \end{theorem}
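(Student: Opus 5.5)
By Proposition \ref{minimumdegree+vertex} it suffices to prove $\kappa(W\Gamma(R))\geq \delta(W\Gamma(R))$. We argue exactly as in the reduced case. Fix $A\subsetneq V(W\Gamma(R))$ with $|A|<\delta(W\Gamma(R))$ and show that $G=W\Gamma(R)-A$ is connected. Write $R\cong F_1\times\cdots\times F_k\times R_{k+1}\times\cdots\times R_n$ as in the setup of this section. If $k=0$, Theorem \ref{nikmehr2021weaklycmpt} shows the graph is complete, and $\kappa=\delta=|V|-1$ holds trivially. The same holds when $R\cong\mathbb{Z}_2\times\cdots\times\mathbb{Z}_2$. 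So assume $k\geq 1$.

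First we record two adjacency facts.
\begin{enumerate}[label=\textup{(\alph*)}]
\item Let $x\notin\bigcup_j H_j$. Then $x$ has a nonzero annihilator supported on a coordinate where $x$ is nonunit. This is either one of two zero field coordinates, or a coordinate $l\geq k+1$ where we can use a power of a nilpotent. The manipulations in the proof of Lemma \ref{nbdlemmaartinian} then show that $x$ is adjacent to every vertex $z$ that is adjacent to some $y\in H_i$.
\item Two vertices $y\in H_i$ and $y'\in H_{i'}$ with $i\neq i'$ are adjacent. Indeed, $\operatorname{ann}(y)$ is supported on coordinate $i$ and $\operatorname{ann}(y')$ on coordinate $i'$, so their product is $0$.
\end{enumerate}
Vertices within the same $H_i$ are nonadjacent, since both annihilators are $F_i^*$ in coordinate $i$ and the product of two of them is nonzero.

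Now take $x,y\in V(G)$. For each vertex $x$, choose a reference vertex $w_x$. If $x\in H_i$, take $w_x=x$. Otherwise, choose some $i$ with $x_i=0$ if one exists, or an $i$ with $x_i\neq 0$ if $x$ is nonunit only in a local coordinate, and pick any $w_x\in H_i$. (We must check that $H_i\neq\emptyset$; this holds since $k\geq1$.) By Lemma \ref{nbdlemmaartinian}, $N(w_x)\subseteq N(x)$. Since $|N(w_x)|\geq\delta>|A|$, some $u\in N(w_x)\setminus A$ survives in $G$, and $u\sim x$. Similarly we obtain $v\in N(y)\cap V(G)$.

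It remains to join $u$ and $v$ in $G$, or to join $x$ to $y$ directly. We split into cases according to whether each of $x,y,u,v$ lies in some $H_i$.
\begin{itemize}
\item If one of $x,y$ lies outside every $H_j$, say $x$, then by (a), $N(w_y)\subseteq N(x)$. Hence $v$, chosen in $N(w_y)$, is adjacent to $x$, and $x\sim v\sim y$ is a path.
\item If $x\in H_i$ and $y\in H_j$ with $i\neq j$, then $x\sim y$ by (b).
\item If $x,y\in H_i$, pick $u\in N(x)\setminus A$. Since $N(x)=N(y)$ (both have the same annihilator), we get $x\sim u\sim y$.
\end{itemize}
This last observation, that vertices of the same $H_i$ have identical neighbourhoods, is the key simplification. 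It also covers the situation where $x\in H_i$ and $y\notin\bigcup H_j$, via the first case.

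The main obstacle is step (a): verifying that Lemma \ref{nbdlemmaartinian} really applies to every non-$H$ vertex, and that the mixed cases introduce no gaps. In particular, we must ensure that every $x\notin\bigcup_j H_j$ has $N(x)\supseteq N(y)$ for some $y\in H_i$ with $H_i$ nonempty. We also need to handle degenerate small cases, for instance $n=k=1$, where the graph may be empty (excluded by hypothesis), and rings where some $H_i$ has no neighbours outside $A$. Since $\delta>|A|$, the pigeonhole step above always supplies a surviving neighbour, so I expect the argument to close once the case analysis for (a) is checked carefully.
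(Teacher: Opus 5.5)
Your proof is correct and follows the paper's route. You reduce via Proposition \ref{minimumdegree+vertex} to showing that $W\Gamma(R)-A$ is connected when $|A|<\delta(W\Gamma(R))$, use $|N(w)|\geq\delta>|A|$ to find a surviving neighbour of an $H_i$-vertex, and transfer it via Lemma \ref{nbdlemmaartinian}. Your case split (one common neighbour when an endpoint lies outside $\bigcup_j H_j$, direct adjacency across different $H_i$, identical neighbourhoods inside one $H_i$) is tidier than the paper's Subcases 1.1--1.3. One degenerate case needs a word: the surviving neighbour may coincide with $x$ or $y$, and then $x\sim y$ directly.
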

    \begin{proof} By the structure theorem of Artinian rings,
$R \cong R_1 \times R_2 \times \cdots \times R_n,$
where $n \geq 1$ and each $R_i$ is an Artinian local ring.
If $n = 1$, then observe that the set  $\mathrm{Nil}(R)^* $ is non-empty. Otherwise, $W\Gamma(R)$ is empty. For $\mathrm{Nil}(R)^* \neq \phi$, by Theorem  \ref{nikmehr2021weaklycmpt}, we have $W\Gamma(R)$ is a complete graph and so $\kappa(W\Gamma(R)) = \delta(W\Gamma(R)).$
We now suppose $n\geq 2$. If  $\mathrm{Nil}(R_i)^* $ is non-empty for each $i(1\leq i\leq n)$, then again by Theorem \ref{nikmehr2021weaklycmpt}, the graph $W\Gamma(R)$ is complete and so $\kappa(W\Gamma(R)) = \delta(W\Gamma(R)).$

 Now suppose  $\mathrm{Nil}(R_i)^* $ is empty for some $i$, where $1\leq i\leq n$. Without loss of generality, assume that $\mathrm{Nil}(R_i)^*=\{0\}$ for all $i$, where $1\leq i\leq k$ and  $\mathrm{Nil}(R_i)^*\neq \{0\} $ for every $j$, where $k+1\leq j\leq n$. In view of the Proposition \ref{minimumdegree+vertex}, we show that $\kappa(W\Gamma(R)) \geq \delta(W\Gamma(R))$. In order to prove this, we show that for  $A \subsetneq  V(W\Gamma(R))$ such that $|A|< \delta(W\Gamma(R))$, the graph  $H = W\Gamma(R) - A$ is connected. For $1\leq i\leq k$, define $$H_i = \{x \mid x_i = 0 \text{ for exactly one } i(1\leq i\leq k),\ \text{and } x_j \in U( R_j) \text{ for all } j(k+1\leq j\leq n)\}.$$
 Let $x,y\in H$. In the following cases, we provide at least one path between $x$ and $y$.

 \textbf{Case-1}  $x\notin H_i$ and $y\notin H_j$ for all $1\leq i,j\leq k$. Consider a vertex $x'=(x_1,x_2,\ldots,x_{i-1},\allowbreak 0,x_{i+1},\ldots,x_k,$
$x_{k+1},x_{k+2},\ldots,x_n)$ of $H_i$. Then ${\rm ann}(x')=\{(0,0,\dots,u_i,\allowbreak 0,\dots,0)\mid u_i\in F_i^*\}$. Let $z\in H_i$. Then ${\rm ann}(z)=\{(0,0,\dots,u_i,0,\dots,0)\mid u_i\in F_i^*\}$. Observe that for any $x''\in {\rm ann}(x')$ and $z'\in {\rm ann}(z)$, we have $x''z'\neq0$ and so $x'\not\sim z$. Therefore, $N(x')\subseteq Z(R)^*-H_i$. It follows that $|Z(R)^*-H_i|\geq \delta (W\Gamma(R))> |A|$ and so  $H$ must contain a vertex $u\in N(x')$. Since by Lemma \ref{nbdlemmaartinian}$, N(x')\subseteq N(x)$. It follows that $u\in N(x)$. 
Similarly for $y\notin H_j$ for all $j$ ($1\leq j\leq k$), we obtain a vertex $y'\in H_j$ such that $N(y')\subseteq Z(R)^*-H_j$. Indeed, $H$ must contains a vertex $v\in N(y)$. With the help of $u$ and $v$, in the following subcases, we find at least one path between $x$ and $y$.

\textbf{Subcase-1.1}
 $u\in H_m$ and  $v\in H_l$. Since $u\in N(x')$ and $v\in N(y')$, we have $m\neq i$ and $j\neq l$. First, suppose that $m\neq l$. For $u'=(0,0,\ldots,0,u'_m,\ldots,0)\in {\rm ann}(u)$ and $v'=(0,0,\ldots,0,v'_l,\ldots,0)\in {\rm ann}(v)$, we obtain $u'v'=0$ and so $u\sim v$. Consequently, we get a path $x\sim u\sim v\sim y$.
 
We may now suppose that  $m=l$. Since $u\in H_l$, we have $u'=(0,0,\ldots,0,u'_l,\ldots,0
)\in {\rm ann}(u)$. Since $y\not\in H_j$, it follows that either $y_i=0$ for at least two $i\in \{1,2, \ldots,k\}$ or $y_j\notin U(R_j)$ for some $j\in\{k+1,\ldots,n\}$. Let $y_i=0$ for at least two $i\in \{1,2, \ldots,k\}$. Without loss of generality, assume that $y_g=0$ and $y_h=0$, where $g,h\in\{1,2, \ldots,k\}$. Then note that either $g\neq l$ or $h\neq l$. If  $g\neq l$, then $y'=(0,0,\ldots,y_g',0,\ldots,0)\in {\rm ann}(y)$ and so $u'y'=0$. Thus $u\sim y$. Moreover, $x\sim u\sim y$. If  $h\neq l$, then $y''=(0,0,\ldots,y_h',0,\ldots,0)\in {\rm ann}(y)$ and so $u'y''=0$. Therefore, $u\sim y$. It follows that  $x\sim u\sim y$.
Now suppose $y_j\not\in U(R_j)$ for some $j\in\{k+1,\ldots, n\}$. It follows that $y_j\in \mathrm{Nil}(R_j)$ for some $j\in\{k+1,\ldots, n\}$. Without loss of generality, assume that $y_{k+1}\in \mathrm{Nil}(R_{k+1})$. First suppose that $y_{k+1}=0$. Since $\mathrm{Nil}(R_{k+1})^*\neq \{0\}$, it follows that there exists $z_{k+1}\in \mathrm{Nil}(R_{k+1})^*$. Let  $y'=(0,0,\ldots,0, {z_{k+1}},0,\dots,0)\in {\rm ann}(y)$. Then note that  $u'y'=0$. Thus,  $u\sim y$. and so  $x\sim u\sim y$. Now  suppose that $y_{k+1}=0$ and $q$ be the index of nilpotence of $y_{k+1}$. Then $y''=(0,0,\ldots,0, {y_{k+1}}^{q-1},0,\dots,0)\in {\rm ann}(y)$. Also $u'y''=0$. Thus,  $u\sim y$. Therefore,  $x\sim u\sim y$.

\textbf{Subcase-1.2}
 $u\in H_l$ and $v\notin H_j$ for all $j$, where $1\leq j\leq k$. Since $v\notin H_j$ for all $j$, then either $v_i=0$ for at least two $i\in \{1,2,\ldots,k\}$ or $v_j\not\in U(R_j)$ for some $j\in\{k+1,k+2,\ldots,n\}$.  By the same argument used in \textbf{Subcase-1.1}, there always exist $v'\in{\rm ann}(v)$ and $u'\in {\rm ann}(u)$ such that $u'v'=0$ and so  $u\sim v$. We obtain a path $x\sim u\sim v\sim y$.

\textbf{Subcase-1.3}
 $u\notin H_i$ and $v\notin H_j$ for all $i,j $, where $1\leq i,j\leq k$.  Then either $u_i=0$ for at least two $i\in \{1,2,\ldots,k\}$ or $u_j\not\in U(R_j)$ for some $j\in\{k+1,k+2,\ldots,n\}$ and either $v_i=0$ for at least two $i\in \{1,2,\ldots,k\}$ or $v_j\not\in U(R_j)$ for some $j\in\{k+1,k+2,\ldots,n\}$. First suppose that $u_i=0$ for at least two $i\in \{1,2,\ldots,k\}$.
Without loss of generality, assume that, $u_g=0$ and $u_h=0$, where $g,h\in \{1,2,\dots,k\}$. If $v_i=0$ for at least two $i\in \{1,2,\ldots,k\}$, then without loss of generality assume that,  $v_s=0$ and $v_t=0$, where $s,t\in \{1,2,\dots,k\}$.

  Observe that both $g$ and $h$ can not be equal to either $s$ or $t$.
If $g\neq s$,
then there exists $u'=(0,0,\ldots,u_g',0\ldots,0)\in {\rm ann}(u)$ and $v'=(0,0,\ldots,v_s',0\ldots,0)\in {\rm ann}(v)$ such that $u'v'=0$ and so  $u\sim v$. 
If $g\neq t$, then there exists $u'=(0,0,\ldots,u_g',0\ldots,0)\in {\rm ann}(u)$ and $v'=(0,0,\ldots,v_t',0\ldots,0)\in {\rm ann}(v)$ such that $u'v'=0$ and so  $u\sim v$. If $h\neq s$ or  $h\neq t$, then by the similar argument $u\sim v$. Suppose $g=s$  and $h=t$. Then  $u''=(0,0,\ldots,u_g'',0\ldots,0)\in {\rm ann}(u)$ and $v''=(0,0,\ldots,v_t'',0\ldots,0)\in {\rm ann}(v)$ and also $u''v''=0$. Thus, $u\sim v$.

If $v_j\not\in U(R_j)$ for some $j\in\{k+1,k+2,\ldots,n\}$, then by the similar argument used in \textbf{Subcase-1.1}, there exists $ v''\in {\rm ann}(v)$ such that $u'v''=0$ and so $u\sim v$. Now, suppose $u_j\not\in U(R_j)$ for some $j\in\{k+1,k+2,\ldots,n\}$ . If $v_i=0$ for at least two $i\in \{1,2,\ldots,k\}$, then again by the similar argument used in \textbf{Subcase-1.1}, we have  $u\sim v$. If $v_j\not\in U(R_j)$ for some $j\in\{k+1,k+2,\ldots,n\}$, then  $v_j\in \mathrm{Nil}(R_j)$  for some $j\in\{k+1,\ldots,n\}$. Without loss of generality, assume that $v_s\in \mathrm{Nil}(R_s)$ and $u_t\in \mathrm{Nil}(R_t)$. Suppose that $q$ and $r$ are the index of nilpotence of $v_s$ and $u_t$, respectively. Then $v'=(0,0,\ldots,0,{v'_s}^{q-1},0,\ldots,0)\in{\rm ann}(v)$ and $u'=(0,0,\ldots,0,{u'_t}^{r-1},0,\ldots,0)\in {\rm ann}(u)$. If $s\neq t$, then $u'v'=0$. Now let  $s= t$. If  ${u'_t}^{r-1}{v'_s}^{q-1}=0$, then $u\sim v$. If ${u'_t}^{r-1}{v'_s}^{q-1}\neq0$, then  $a = (0, \ldots, 0, {u'_t}^{r-1}{v'_s}^{q-1}, 0, \ldots, 0) \in \operatorname{ann}(u) \cap \operatorname{ann}(v)$.  Clearly,  $a^2=0$ and so $u\sim v$.

\textbf{Case-2}
 $x\notin H_i$ for all $i$, where $i\in\{1,2,\ldots,k\}$ and $y\in H_j$ for some $j\in\{1,2,\ldots,k\}$. In similar lines of \textbf{Case-1}, we get $N(y)\subseteq Z(R)^*-H_j$. It implies that  $|Z(R)^*-H_j|\geq \delta(W\Gamma(R))> |A|$ and so that $H$ must contain a vertex $v'\in N(y)$. Since $x\notin H_i$, by Case-1, there exist $u\in H$ such that $u\in {N(x)}$.
By the similar argument used in \textbf{Case-1}, we obtain $u\sim v'$. Thus,  $x\sim u\sim v'\sim y$ is a path in $H$.

\textbf{Case-3}
 $x\in H_i$ and $y\in H_j$ for some $i,j$, where $i,j\in \{1,2,\ldots,k\}$. Then $N(x)\subseteq Z(R)^*-H_i$ and $N(y)\subseteq Z(R)^*-H_j$ for some $i,j\in \{1,2,\ldots,k\}$. Consequently, we have $|Z(R)^*-H_i|\geq \delta(W\Gamma(R))> |A|$ and $|Z(R)^*-H_j|\geq \delta(W\Gamma(R))> |A|$. Moreover, $H$  must contain  vertices $u'\in N(x)$ and  $v'\in N(y)$. By the similar argument used in  \textbf{Case-1}, we obtain  paths  $x\sim u'\sim v'\sim y$ or  $x\sim u'\sim y$ or  $x\sim v'\sim y$  between $x$ and $y$.  This completes our proof.
 \end{proof}

The following result determines the cut sets of  $W\Gamma(R)$.
\begin{theorem}\label{cutsetartinianring} 
Let $R$ be an Artinian ring. Then for each $i\in\{1,2,\dots k\}$,  the set $Z(R)^*\setminus\{H_i\}$ is a cut set of $W\Gamma(R)$.
\end{theorem}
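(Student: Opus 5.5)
Throughout, $R \cong F_1\times\cdots\times F_k\times R_{k+1}\times\cdots\times R_n$ as in the setting of this section, and $H_i$ is as defined before Lemma \ref{nbdlemmaartinian}. The argument follows the reduced-ring case (Theorem \ref{cutsetinreduced}). Two things must be shown: removing $C_i:=Z(R)^*\setminus H_i$ disconnects $W\Gamma(R)$, and no proper subset of $C_i$ does.

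\textbf{Step 1: annihilators and independence.} For $x\in H_i$, exactly the $i$-th coordinate of $x$ vanishes, the other field coordinates are nonzero, and the local coordinates are units. Hence
\[
{\rm ann}(x)=\{(0,\ldots,0,u_i,0,\ldots,0)\mid u_i\in F_i\}.
\]
Given $x,y\in H_i$, any $a\in{\rm ann}(x)$ and $b\in{\rm ann}(y)$ with $ab=0$ must satisfy $a_ib_i=0$, so $a=0$ or $b=0$. Adjacency in $W\Gamma(R)$ should be read with nonzero annihilating elements, as in the reduced case; otherwise $0\cdot 0=0$ would make every pair adjacent. So $H_i$ is an independent set, as already observed in Case~1 of Theorem \ref{finitering}.

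\textbf{Step 2: $C_i$ separates.} By Step~1, $N(x)\subseteq C_i$ for every $x\in H_i$, so deleting $C_i$ leaves the vertices of $H_i$ isolated. For this to be a separation we need $|H_i|\ge 2$, which holds as soon as some factor other than $F_i$ has at least two units: either $k\ge 2$ with some $F_j\neq\mathbb{Z}_2$, or $n>k$, since a non-reduced local ring has at least two units. The degenerate case $R\cong\mathbb{Z}_2^k$ gives a complete graph (Theorem \ref{nikmehr2021weaklycmpt}) and is excluded, or handled separately.

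\textbf{Step 3: minimality.} Let $A\subsetneq C_i$ and pick $z\in C_i\setminus A$. We show that $z$ is adjacent to every $x\in H_i$. Then all of $H_i$ together with $z$ lies in one component of $W\Gamma(R)-A$, and every remaining vertex of $C_i\setminus A$ is joined to $H_i$ in the same way. It follows that $W\Gamma(R)-A$ is connected, so no proper subset of $C_i$ separates. To find the adjacency $x\sim z$, take $a=e_i$ (the $i$-th unit idempotent) in ${\rm ann}(x)$; we need a nonzero $b\in{\rm ann}(z)$ with $b_i=0$. We split into cases on $z$.
\begin{itemize}
\item If $z\in H_j$ with $j\neq i$, take $b=e_j$.
\item If $z_j=0$ for two indices $j,l\le k$, choose one of them different from $i$ and take the corresponding $e_j$.
\item If $z_l$ is a non-unit for some $l>k$, take $b=(0,\ldots,0,t,0,\ldots,0)$ with $t\neq 0$ and $tz_l=0$, where $t=z_l^{q-1}$ if $z_l\neq0$ has nilpotency index $q$, and $t$ is any nonzero nilpotent if $z_l=0$. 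This is the same device as in Lemma \ref{nbdlemmaartinian}.
\end{itemize}
These cases exhaust $C_i$, because a vertex not in any $H_j$ has either two zero field coordinates or a non-unit local coordinate. This is exactly the dichotomy used in Theorem \ref{finitering}.

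\textbf{Main obstacle.} The main difficulty is making the minimality statement precise. The paper's definition of cut set asks for a particular pair $x,y$ with no proper subset still separating them. So we fix two distinct $x,y\in H_i$ and show that each $z\in C_i$ is a common neighbour of $x$ and $y$. Then any $A\subsetneq C_i$ misses some $z$, and $x\sim z\sim y$ survives in $W\Gamma(R)-A$. This is precisely what Step~3 gives, and it avoids any need to analyse the remaining components.
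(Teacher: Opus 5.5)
Your proof is correct and follows the same route as the paper. Both arguments show that $H_i$ is independent and that every vertex of $C_i=Z(R)^*\setminus H_i$ is adjacent to every vertex of $H_i$. Hence deleting $C_i$ isolates $H_i$, while deleting any proper subset of $C_i$ leaves $H_i$ completely joined to the surviving vertices. Your explicit use of nonzero annihilators and of the requirement $|H_i|\ge 2$ makes precise what the paper leaves implicit.

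One correction: $|H_i|=1$ happens exactly when $R\cong F_i\times\mathbb{Z}_2\times\cdots\times\mathbb{Z}_2$, not only when $R\cong\mathbb{Z}_2^k$. For example, take $R=\mathbb{Z}_3\times\mathbb{Z}_2$ and $i=1$. Then $W\Gamma(R)$ is the non-complete path $(1,0)\sim(0,1)\sim(2,0)$ and $H_1=\{(0,1)\}$. In these cases the statement is genuinely false, so they must be excluded by hypothesis rather than ``handled separately''.
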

 \begin{proof}   Let  $R \cong F_1 \times F_2\times \cdots \times F_k \times R_{k+1}\times \cdots \times R_n$, where $F_i$ is a field for every $1 \le i \le k$ and $|\operatorname{Nil}(R_j)^{*}| \neq \{0\}$ for every $k+1 \leq j \leq n$.
 We claim that the graph $W\Gamma(H_i)$ induced by the set  $H_i\subseteq Z(R)^*$ is a null graph with only neighbourhood to the set $Z(R)^*\setminus \{H_i\}$.
  Let  $x,y\in H_i$. Then ${\rm ann}(x)={\rm ann}(y)=\{(0,0,\dots,u_i,0,\dots,0)\mid u_i\in F_i\}$.  For any  $x'\in {\rm ann}(x)$ and $y'\in {\rm ann}(y)$. Note that  $x'y'\neq0$ and so $x\not\sim y$. Let $s=(s_1,s_2,\dots,s_k,s_{k+1},\dots,s_n)\in Z(R)^*\setminus \{H_i\}$. Then either $s_j=0$ for some $j\in\{1,2,\ldots,k\}\setminus\{i\}$ or $s_l\in \mathrm{Nil}(R_j)$ for some $l\in\{k+1,k+2,\ldots,n\}$. First suppose  $s_j=0$ for some $j\in\{1,2,\ldots,k\}\setminus\{i\}$. 
  Without loss of generality, assume that $s_p=0$, where  $p\in\{1,2,\ldots,k\}\setminus\{i\}$. Then note that $\{(0,0,\dots,u_p,0,\dots,0)\mid u_p\in F_p\}\subseteq{\rm ann}(s)$. Let 
 $s'\in\{(0,0,\dots,u_p,0,\dots,0)\mid u_p\in F_p\}\subseteq{\rm ann}(s)$ and $x''\in \{(0,0,\dots,u_i,0,\dots,0)\mid u_i\in F_i\}={\rm ann}(x)$.  Then clearly $s'x''=0$ and so $x\sim s$. Now we suppose that  $s_j\in \mathrm{Nil}(R_j)$ for some $j\in\{k+1,k+2,\ldots,n\}$. Without loss of generality, assume that
   $s_q\in\mathrm{Nil}(R_q)$, where  $q\in\{k+1,k+2,\ldots,n\}$. First, suppose that $s_q=0$. Since $\mathrm{Nil}(R_q)^*\neq0$, notice that $t=(0,0,\dots,0,0,\dots,t_q,0,\ldots,0)\in{\rm ann}(s)$, where $t_q\in \mathrm{Nil}(R_q)^*$. Observe that $tx''=0$. Now suppose that $s_q\in\mathrm{Nil}(R_q)^*$ and let $r$ be the index of nilpotence of $s_q$. Then $s''= (0,0,\dots,0,0,\dots,s_q^{r-1},0,\ldots,0)\in{\rm ann}(s)$. Clearly $s''x''=0$ and so $s\sim x$. Therefore, $Z(R)^*\setminus\{H_i\}\subseteq N(H_i)$. Since $W\Gamma(H_i)$ is null graph, we have $ N(H_i)=Z(R)^*\setminus\{H_i\}$. Moreover, $Z(R)^*\setminus\{H_i\}$ isolates the vertices of the graph $W\Gamma(H_i)$.
   
 Let $S=Z(R)^*\setminus\{H_i\}$.
 Suppose $A\subsetneq S$ such that the graph $W\Gamma(Z(R)^*\setminus A) $ is disconnected. Since $A\subsetneq S$, it follows that there exists some $a\in T=S\setminus A$. Observe that $Z(R)^*\setminus A= H_i\cup T$. To end the proof, we show that the graph $W\Gamma( H_i\cup T)$ is connected. Now let $x,y\in W\Gamma( H_i\cup T)$ such that $x\not\sim y$. Then, either $x,y\in H_i$ or $x,y\in T$. Otherwise, $x\sim y$. 
  First suppose $x,y\in H_i$.  Then again, by the same argument used earlier, there exists $t\in T$ such that $x\sim t\sim y$. We now suppose that $x,y \in T$. Then, by the same argument used earlier, there exists $h\in H_i$ such that $x\sim h\sim y$. This completes our proof.
\end{proof}
      
Let  ${\theta_{max}}:=\text{max}\{|H_i: \text{ for all }1\leq i\leq k\}$ and $T:=\{i\mid 1\leq i\leq k, |H_i|=\theta_{max}\}.$
      
\begin{corollary}\label{cutsetseizeartinian} For $i\in T$, the set $Z(R)^*\setminus\{H_i\}$ is a cut set of $W\Gamma(R)$ of size $|Z(R)^*|-\theta_{max}$.  In particular, $\kappa(W\Gamma(R))\leq |Z(R)^*|-\theta_{max}$. 
\end{corollary}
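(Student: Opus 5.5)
The corollary follows directly from Theorem~\ref{cutsetartinianring} once the size of the cut set is computed, in the same way as the analogous corollary in the reduced case. Fix $i\in T$. By Theorem~\ref{cutsetartinianring}, $Z(R)^*\setminus\{H_i\}$ is a cut set of $W\Gamma(R)$, so only its cardinality needs to be determined.

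First I will check that $H_i\subseteq Z(R)^*$. An element $x\in H_i$ has $x_i=0$ with $F_i\neq 0$, so $(0,\dots,0,1,0,\dots,0)$ (with $1$ in the $i$th place) annihilates $x$; hence $x$ is a zero-divisor. It is nonzero, because the components $x_j$ for $k+1\le j\le n$ are units, and also the components $x_l$ for $l\neq i$, $1\le l\le k$, are nonzero by the ``exactly one'' condition. Therefore $|Z(R)^*\setminus H_i|=|Z(R)^*|-|H_i|$. Since $i\in T$, we have $|H_i|=\theta_{max}$, and the size is $|Z(R)^*|-\theta_{max}$.

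For the second assertion I will use that the vertex connectivity is the cardinality of a minimum cut set. Every cut set has cardinality at least $\kappa(W\Gamma(R))$, so applying this to the cut set $Z(R)^*\setminus\{H_i\}$ gives $\kappa(W\Gamma(R))\leq |Z(R)^*|-\theta_{max}$.

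There is no real obstacle here. The only point needing care is that $T$ is nonempty, which requires $k\ge 1$; this holds under the standing assumption of this section, where at least one factor is a field. In the finite case $|H_i|$ is finite and the subtraction is meaningful. For infinite rings the statement should be read as a statement about cardinalities of the complement, exactly as in the reduced setting.
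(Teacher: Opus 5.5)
Your proposal is correct and follows the same route as the paper, which states this corollary without proof as an immediate consequence of Theorem~\ref{cutsetartinianring} together with the definitions of $\theta_{max}$ and $T$. The two steps the paper leaves implicit are the ones you supply: the check that $H_i\subseteq Z(R)^*$, so that $|Z(R)^*\setminus H_i|=|Z(R)^*|-|H_i|$, and the observation that $\kappa$ is at most the size of any cut set.
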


\begin{corollary}\label{degreeinartinian}
     Let $R$ be an Artinian ring. Then every vertex of the set $H_i$ is of degree  $|Z(R)^*|-\theta_{max}$. As a consequence, $\delta(W\Gamma(R))\leq |Z(R)^*|-\theta_{max}$.
   \end{corollary}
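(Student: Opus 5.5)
The plan is to compute the neighbourhood of a vertex of $H_i$ exactly: I will show that $N(x)=Z(R)^*\setminus H_i$ for every $x\in H_i$. Then $\deg(x)=|Z(R)^*|-|H_i|$, which equals $|Z(R)^*|-\theta_{max}$ exactly when $i\in T$. So the statement should be read with $i\in T$, as in Corollary \ref{cutsetseizeartinian}. The bound on $\delta(W\Gamma(R))$ then follows by taking any $x\in H_i$ with $i\in T$.

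\emph{Step 1 (annihilator of $x$).} Write $R\cong F_1\times\cdots\times F_k\times R_{k+1}\times\cdots\times R_n$ as in the setup of this section. For $x\in H_i$ we have $x_i=0$, $x_j\in F_j^*$ for $j\le k$ with $j\neq i$, and $x_l\in U(R_l)$ for $l\ge k+1$. Hence an element $a$ annihilates $x$ exactly when $a_j=0$ for all $j\neq i$, so
\[
{\rm ann}(x)=\{(0,\dots,0,u_i,0,\dots,0)\mid u_i\in F_i\}.
\]
In particular $H_i$ is nonempty, and all its elements have the same annihilator.

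\emph{Step 2 (no edges inside $H_i$).} Let $x,y\in H_i$ be distinct. As in Case-1 of Theorem \ref{finitering}, it seems the nonzero elements $(0,\dots,u_i,\dots,0)$ and $(0,\dots,u_i',\dots,0)$ have nonzero product because $F_i$ is a field, so $x\not\sim y$. The obstacle is that ${\rm ann}(x)$ contains $0$, and $0\cdot w=0$ would formally give an edge. So I must read the adjacency condition with nonzero annihilating elements, exactly as the paper does in Lemma \ref{degrrinreduce} and Theorem \ref{cutsetartinianring}. I will state this convention explicitly; without it the graph would be complete and no such degree count could hold. Under this convention no vertex of $H_i$ is adjacent to another vertex of $H_i$.

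\emph{Step 3 (all other vertices are neighbours).} This is already carried out in the proof of Theorem \ref{cutsetartinianring}. Let $s\in Z(R)^*\setminus H_i$. Then either $s_p=0$ for some $p\le k$ with $p\neq i$, or $s_q\in\mathrm{Nil}(R_q)$ for some $q\ge k+1$. In the first case take the element with a single nonzero entry $1$ in coordinate $p$; it lies in ${\rm ann}(s)$. In the second case take the element with single entry $t_q$ in coordinate $q$, where $t_q\in\mathrm{Nil}(R_q)^*$ if $s_q=0$ and $t_q=s_q^{r-1}$ with $r$ the index of nilpotence of $s_q$ otherwise; it also lies in ${\rm ann}(s)$. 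In either case this element multiplies to zero with $(0,\dots,1_i,\dots,0)\in{\rm ann}(x)$, so $x\sim s$.

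\emph{Step 4 (conclusion).} Combining Steps 2 and 3 gives $N(x)=Z(R)^*\setminus H_i$, hence $\deg(x)=|Z(R)^*|-|H_i|$. For $i\in T$ this is $|Z(R)^*|-\theta_{max}$, and therefore $\delta(W\Gamma(R))\le|Z(R)^*|-\theta_{max}$.
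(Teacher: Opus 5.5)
Your proposal is correct and is essentially the paper's proof. The paper also shows $N(x)=Z(R)^*\setminus H_i$ for $x\in H_i$ by the argument in Theorem \ref{cutsetartinianring}, and then uses $i\in T$ to get $\deg(x)=|Z(R)^*|-\theta_{max}$; your two explicit remarks, that adjacency must be read with nonzero annihilating elements and that the statement requires $i\in T$, are both right and are left implicit in the paper.
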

\begin{proof} Let $x\in H_i$. By the argument used in the proof of Theorem \ref{cutsetartinianring}, we obtain $N(x)=Z(R)^*\setminus\{H_i\}$. This completes our proof because $i\in T$.
\end{proof}

The following result determines the vertex connectivity of the graph $W(\Gamma(R)$.

\begin{theorem}
    Let $R$ be an Artinian ring. Then for  $i\in T$, $Z(R)^*\setminus\{H_i\}$ is a minimum cut set of $W\Gamma(R)$. If $R$ is finite, then $$\kappa(W\Gamma(R))=|Z(R)^*|-\theta_{max}=\delta(W\Gamma(R)).$$
\end{theorem}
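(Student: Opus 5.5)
The plan is to combine the upper bounds already established with the equality $\kappa=\delta$ from Theorem \ref{finitering}. By Corollary \ref{cutsetseizeartinian}, for $i\in T$ the set $Z(R)^*\setminus\{H_i\}$ is a cut set of size $|Z(R)^*|-\theta_{max}$, so $\kappa(W\Gamma(R))\leq |Z(R)^*|-\theta_{max}$. By Corollary \ref{degreeinartinian}, every vertex of $H_i$ with $i\in T$ has degree $|Z(R)^*|-\theta_{max}$, so $\delta(W\Gamma(R))\leq |Z(R)^*|-\theta_{max}$. Theorem \ref{finitering} gives $\kappa(W\Gamma(R))=\delta(W\Gamma(R))$. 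What remains is the reverse inequality $\kappa(W\Gamma(R))\geq |Z(R)^*|-\theta_{max}$. Equivalently, we must show that every cut set has at least that many elements, or that every vertex has degree at least $|Z(R)^*|-\theta_{max}$.

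I would argue via a minimum cut set $X$, mirroring the reduced case. Let $X$ separate the graph into parts containing $x$ and $y$ with $x\not\sim y$.

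The key step is to show that two non-adjacent vertices must lie in the same $H_i$. The computations in Subcases 1.1--1.3 of Theorem \ref{finitering} and in Theorem \ref{cutsetartinianring} give the following adjacencies:
\begin{itemize}
\item any vertex outside $\bigcup_j H_j$ is adjacent to every other vertex, since its annihilator contains an element of a field coordinate $p$ or a nilpotent in some $R_q$, and either of these can be paired as in those proofs;
\item vertices of $H_i$ and $H_j$ with $i\neq j$ are adjacent, because their annihilators are supported on different coordinates.
\end{itemize}
Hence $x,y\in H_i$ for a single $i$.

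Every vertex $z\in Z(R)^*\setminus H_i$ is adjacent to both $x$ and $y$, by the first paragraph of the proof of Theorem \ref{cutsetartinianring}. Since $x$ and $y$ lie in different components after removing $X$, such a $z$ cannot survive, so $Z(R)^*\setminus H_i\subseteq X$. Therefore
$$|X|\geq |Z(R)^*|-|H_i|\geq |Z(R)^*|-\theta_{max}.$$
Combined with the corollaries, this yields that $Z(R)^*\setminus\{H_i\}$, $i\in T$, is a minimum cut set, and that for finite $R$ all three quantities coincide.

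The main obstacle is the claim that a vertex outside every $H_j$ is universal. It needs care when such a vertex $s$ is paired with an element of some $H_i$ whose only possible zero coordinate is field coordinate $i$ itself. I would handle this as in Lemma \ref{nbdlemmaartinian}. If $s_i=0$ and no other field coordinate of $s$ vanishes, then $s$ has a non-unit coordinate $s_q$ in some $R_q$. Take $(0,\ldots,s_q^{r-1},\ldots,0)\in{\rm ann}(s)$, or a nonzero nilpotent if $s_q=0$. This element kills the annihilator of any vertex of $H_i$, which is supported on coordinate $i$. Two non-$H$ vertices are adjacent by the nilpotent-square trick of Subcase 1.3.

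Finiteness of $R$ is used only so that the cardinalities $|Z(R)^*|-\theta_{max}$ are meaningful numbers. If $k=0$, the graph is complete by Theorem \ref{nikmehr2021weaklycmpt} and the statement is vacuous.
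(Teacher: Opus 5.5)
Your proposal is correct and follows essentially the same route as the paper. You take a minimum cut set $X$ separating $x$ and $y$, observe that non-adjacent vertices must lie in a common $H_i$ (vertices outside $\bigcup_j H_j$ are universal, and distinct $H_j$'s are mutually adjacent), deduce $Z(R)^*\setminus H_i\subseteq X$ since every other vertex is adjacent to both $x$ and $y$, and close the chain $|Z(R)^*|-\theta_{max}\geq\kappa(W\Gamma(R))=|X|\geq|Z(R)^*\setminus H_i|\geq|Z(R)^*|-\theta_{max}$ via Corollary \ref{cutsetseizeartinian}. Like the paper, you tacitly exclude the complete case $R\cong\mathbb{Z}_2\times\cdots\times\mathbb{Z}_2$: there $\theta_{max}=1$, so removing $Z(R)^*\setminus H_i$ leaves a single vertex and the set is not literally a cut set.
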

\begin{proof}
   By Corollary \ref{cutsetseizeartinian}, for all $i\in T$,  $Z(R)^*\setminus\{H_i\}$ is the cut set of $W\Gamma(R)$ of size  $|Z(R)^*|-\theta_{max}$. Let $X$ be a minimum cut set of $W\Gamma(R)$, and let $A$ and $B$ be two components of $W\Gamma(R)$ or $\{W\Gamma(R\setminus X)\}$. Define a set
   $$S=\{x\mid x_i = 0 \text{ for atleast two  } i(1\leq i\leq k) \text{ or }  x_j \notin U(R_j) \text{ for some } j(k+1\leq j\leq n).$$
   Clearly $S= Z(R)^*\setminus\cup_{i=1}^{k}H_i$ and so $V(W\Gamma(R))=S+\cup_{i=1}^{k}H_i$. Suppose $a\in A$ and $b\in B$. Since $A$ and $B$ are the two separated sets of $W\Gamma(R)$ or  $\{W\Gamma(R\setminus X)\}$, we obtain $a,b\in H_i$, where  $i\in \{1,2,\ldots,k\}$. Otherwise $a\sim b$.  Let   $x\in S$. Then, by the same argument used in \textbf{Subcase-1.2} of Theorem \ref{finitering}, we obtain $a\sim x\sim b$.   Thus, $x\in X$ and so $S\subseteq X$.
 Now, let $y\in H_j$ for some $i\in\{1,\ldots,k\}\setminus\{i\}$. Then by the similar argument used in \textbf{Case-3} of Theorem \ref{finitering}, we get $a\sim x\sim b$. It follows that $y\in X$ and so $\cup_{j=1,j\neq i}^{k}H_j\subseteq X$.  Consequently, $S+\cup_{j=1,j\neq i}^{k}H_j\subseteq X$. Since $X$ is a minimum cut set and  $Z(R)^*\setminus\{H_i\}$, where  $i\in T$ is a cut set having cardinality $|Z(R)^*|-\theta_{max}$, we have 
 $|Z(R)^*|-\theta_{max}\geq \kappa(W\Gamma(R))=|X|\geq |Z(R)^{*}\setminus\{H_i\}|\geq|Z(R)^*|-\theta_{max}$. Thus, $\kappa(W\Gamma(R))=|Z(R)^*|-\theta_{max}$.
\end{proof} 

\begin{example}
Let $R=\mathbb{Z}_2\times\mathbb{Z}_3\times\mathbb{Z}_4\times\mathbb{Z}_8$ be an Artinian  ring.
Note that $$H_1=\{(0,a,b,c)\mid a\in\mathbb{Z}_3^*,\ b\in U(\mathbb{Z}_4),\ c\in U(\mathbb{Z}_8)\}$$ $$H_2=\{(a,0,b,c)\mid a\in\mathbb{Z}_2^*,\ b\in U(\mathbb{Z}_4),\ c\in U(\mathbb{Z}_8)\}.$$ Moreover, $|H_1|=16$ and $|H_2|=8$ and
$\theta_{max}= \text{max}\{|H_1|,|H_2|\}=16.$
Note that the sets $Z(R)^*\setminus\{H_1\}$ and $Z(R)^*\setminus\{H_2\}$ are the cut sets of $W\Gamma(R)$. Since $|H_1|> |H_2|$, it follows that  $Z(R)^*\setminus\{H_1\}$ is a minimum cut set of $W\Gamma(R)$. Consequently, $\kappa(W\Gamma(R))=|Z(R)^*|-\theta_{max}=192-16=176=\delta(W\Gamma(R)).$
\end{example}


  The following theorem characterizes all the vertices of minimum degree of the graph $W\Gamma(R)$, where $R$ is an Artinian ring.
 \begin{theorem} 
  Let $R$ be an Artinian ring. Then $deg(x)=|Z(R)^*|-\theta_{max}=\delta(W\Gamma(R))$ if and only if $x\in H_i$ and $i\in T$.
 \end{theorem}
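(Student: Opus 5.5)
The plan is to follow the pattern of the reduced-ring characterization: first compute exactly the degrees of vertices in each $H_i$, then show that every vertex outside $\bigcup_{i=1}^k H_i$ has strictly larger degree. By the preceding theorem we may take $\delta(W\Gamma(R))=|Z(R)^*|-\theta_{max}$.

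For the ``if'' direction, let $x\in H_i$. The proof of Theorem \ref{cutsetartinianring} shows two things. First, $H_i$ induces a null graph, since every element of $H_i$ has annihilator $\{(0,\dots,u_i,\dots,0)\}$ and products of nonzero such elements are nonzero. Second, every $s\in Z(R)^*\setminus H_i$ is adjacent to $x$. Hence $N(x)=Z(R)^*\setminus H_i$ and $\deg(x)=|Z(R)^*|-|H_i|$. When $i\in T$ this equals $|Z(R)^*|-\theta_{max}=\delta(W\Gamma(R))$.

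For the converse, suppose $\deg(x)=\delta(W\Gamma(R))$. First we rule out $x\in H_i$ with $i\notin T$: there $\deg(x)=|Z(R)^*|-|H_i|>|Z(R)^*|-\theta_{max}$, a contradiction. The remaining case, which is the main step, is $x\in S=Z(R)^*\setminus\bigcup_{i=1}^k H_i$. In the spirit of Lemma \ref{minimumdegree}, we show that such an $x$ is adjacent to every other vertex, so $\deg(x)=|Z(R)^*|-1$.

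To do this, for $x\in S$ we exhibit a nonzero element $a\in{\rm ann}(x)$ that annihilates some element of ${\rm ann}(z)$ for every $z\neq x$.

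\begin{itemize}
\item If $x_j\in\mathrm{Nil}(R_j)$ for some $j>k$, take $a$ supported only in coordinate $j$. Use $a=x_j^{q-1}$ if $x_j\neq0$, where $q$ is the index of nilpotence, and otherwise a nonzero element of the minimal ideal of $R_j$. The argument of Lemma \ref{nbdlemmaartinian} and Subcase 1.1 then shows that every $z$ has an annihilator element vanishing on, or killed by, that coordinate. This works because the socle of $R_j$ squares to zero and lies in the maximal ideal.
\item If instead $x_s=x_t=0$ for two field coordinates $s\neq t$, Case 1 of Lemma \ref{nbdlemmaartinian} gives adjacency to every $z$. Here we take $w\in{\rm ann}(z)$ with coordinate $s$ or $t$ set to zero.
\end{itemize}

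So $\deg(x)=|Z(R)^*|-1$.

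To conclude, we check $|Z(R)^*|-1>|Z(R)^*|-\theta_{max}$, i.e. $\theta_{max}\ge2$. This is the delicate point. If all $F_i=\mathbb{Z}_2$ and $n=k$, the graph is complete by Theorem \ref{nikmehr2021weaklycmpt}. In that case the statement must be read with $\theta_{max}=1$, and every vertex then attains the minimum degree, so the claim holds only if $S$ is empty, which it is not once $k\ge3$. I would therefore handle it by noting that when $n>k$, $|H_i|\ge|U(R_j)|\ge2$, and when $n=k$ with some $|F_i|\ge3$, $|H_i|\ge2$. The fully complete case, where $\theta_{max}=1$, I would treat as an excluded degenerate case under the section's standing assumption. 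Identifying exactly when $\theta_{max}\ge2$ is the main obstacle; the rest is bookkeeping from Theorem \ref{cutsetartinianring}.
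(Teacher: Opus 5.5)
Your proposal is correct and follows the paper's proof. For $x\in H_i$ you get $N(x)=Z(R)^*\setminus H_i$ from the proof of Theorem~\ref{cutsetartinianring}, every vertex outside $\bigcup_{i=1}^k H_i$ is adjacent to all other vertices, and $i\notin T$ forces $\deg(x)>|Z(R)^*|-\theta_{max}$. Your caveat is also right: the paper uses $|Z(R)^*|-1>|Z(R)^*|-\theta_{max}$ without justification, and for $R\cong\mathbb{Z}_2^k$ with $k\ge 3$ (where $\theta_{max}=1$) the statement is actually false, e.g.\ $(0,0,1)\in\mathbb{Z}_2^3$ has minimum degree but lies in no $H_i$. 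The section's standing assumptions do not exclude this case, so it needs an explicit extra hypothesis.
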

 \begin{proof}
      First, assume that $x\in H_i$ for some  $i\in T$. Then by Corollary \ref{degreeinartinian}. $deg(x)=|Z(R)^*|-\theta_{max}=\delta(W\Gamma(R))$.
 Now assume that $deg(x)=\delta(W\Gamma(R))$.
Suppose $x\in Z(R)^*$ such that  $x\not\in H_i$ for all $1\leq i\leq k$. 
For $y\in Z(R)^*$, note that either $y\in H_i$ for some $1\leq i\leq k$ or $y\notin H_j$ for all $1\leq j\leq k$.  By the same argument used in \textbf{Subcase-1.2} and  \textbf{Subcase-1.3} of Theorem \ref{finitering}, we have $x\sim y$.
Therefore, $deg(x)=|Z(R)^*|-1>|Z(R)^*|- \theta_{max}=\delta({W\Gamma(R)})$, a contradiction. Thus, $x\in H_i$ for some $i\in \{1,2, \ldots,k\}$. If $i\not\in T$, then $deg(x)=|Z(R)^*|-|H_i|> |Z(R)^*|-\theta_{max}=\delta({W\Gamma(R)})$, a contradiction. Therefore, $i\in T$
\end{proof} 
\section{Conclusion and Future Work} 
In this paper, we have investigated the vertex connectivity of the weakly zero-divisor graph $W\Gamma(R)$ associated with commutative rings. In this connection, we prove that, when $R$ is an Artinian ring or a reduced ring, the vertex connectivity of $W\Gamma(R)$ coincides with its minimum degree. In the case of reduced rings having finitely many minimal prime ideals, we obtained an explicit expression for the vertex connectivity in terms of the cardinalities of the sets determined by these minimal prime ideals. We also characterized the vertices of minimum degree through the sets associated with minimal prime ideals of maximum cardinality.
By using the structure theorem of Artinian rings, we ascertain minimum cut sets of $W\Gamma(R)$ and obtained its vertex connectivity. Indeed, it is equal to the minimum degree of $W\Gamma(R)$. 
Our results demonstrate a close relationship between the algebraic structure of the ring $R$ and the connectivity properties of its associated weakly zero-divisor graph. In particular, the decomposition of Artinian rings and the structure of minimal prime ideals in reduced rings are useful in describing the cut sets and determining the vertex connectivity of $W\Gamma(R)$.

The following questions remain open for future research.
\medskip
\begin{problem}\label{op:general-n}
    Let $R$ be a reduced ring with an infinite number of minimal prime ideals. Is $\kappa(W\Gamma(R))=\delta(W\Gamma(R))$$?$
 \end{problem}
\begin{problem}\label{op:general-n}
  Classification of all the non-Artinian rings $R$ such that  $\kappa(W\Gamma(R))=\delta(W\Gamma(R))$$?$
 \end{problem}

\section*{Declarations}
\noindent \textbf{Data Availability:}	There is no data associated with this article.

\medskip
\noindent \textbf{Funding:} The first author gratefully acknowledges Birla Institute of Technology and Science (BITS) Pilani, India, for providing financial support.

\medskip
\noindent \textbf{Conflict of interest:} The authors have no competing interests to declare that are relevant to the content of this article.


\end{document}